\newcommand{\ncm}{\newcommand}
\newtheorem{theorem}{Theorem}[section]
\newtheorem{prop}[theorem]{Proposition}
\newtheorem{lemma}[theorem]{Lemma}
\newtheorem{cor}[theorem]{Corollary}
\newtheorem{lem&def}[theorem]{Lemma \& Definition}
\newtheorem{remark}[theorem]{Remark}
\def\Z{\mathbb{Z}\,} 
\def\N{\mathbb{N}\,}
\ncm{\Ann}{\mbox{\rm Ann}\,}
\ncm{\End}{\mbox{\rm End}\,}
\def\Indec{\mbox{\rm Indec}\,}
\def\|{\, | \,}
\def\bra{\langle}
\def\ket{\rangle}
\ncm{\rarr}[1]{\stackrel{#1}{\longrightarrow}}
\ncm{\larr}[1]{\stackrel{#1}{\longleftarrow}}
\def\cop{\Delta}
\def\eps{\varepsilon}
\def\-2{_{(-2)}}
\def\-1{_{(-1)}}
\def\0{_{(0)}}
\def\1{_{(1)}}
\def\2{_{(2)}}
\def\3{_{(3)}}
\def\du1{\hat 1}
\newcommand{\ZZ}{\mathbb{Z}}  
\newcommand{\CC}{\mathbb{C}}
\newcommand{\A}{\mathsf{A}}
\def\cent1{\mathsf{Z}(\A_1)}
\newcommand{\ee}[1]{\mathsf{e}_{#1}}
\newcommand{\uu}[1]{\mathsf{t}_{#1}} 
\newcommand{\ww}[1]{\mathsf{w}_{#1}} 
\begin{document}
\title[Depth of half quantum group in Drinfeld double]{A quantum subgroup depth}
\author[A.~Hernandez, L.~Kadison and S.A.~Lopes]{Alberto Hernandez, Lars Kadison and Samuel A.\ Lopes} 
\address{Departamento de Matem\'atica \\ Faculdade de Ci\^encias da Universidade do Porto \\ 
Rua do Campo Alegre, 687 \\ 4169-007 Porto, Portugal} 
\email{ahernandeza079@gmail.com, lkadison@fc.up.pt, slopes@fc.up.pt} 
\subjclass{16D20, 16D90}  
\keywords{half quantum group, Green ring, subgroup depth, Hopf subalgebra, quotient module}
\date{} 

\begin{abstract}
 The Green ring of  the half quantum group $H=U_n(q)$ is computed in \cite{COZ}.   The tensor product formulas between indecomposables may be used for a generalized subgroup depth computation in the setting of quantum groups --  to  compute depth of the Hopf subalgebra $H$ in its Drinfeld double $D(H)$. In this paper the Hopf subalgebra quotient module $Q$ (a generalization of the permutation module of cosets for a group extension) is computed and, as $H$-modules, $Q$ and its second tensor power  are decomposed into a direct sum of indecomposables.  We note that the least power $n$, referred to as depth, for which $Q^{\otimes (n)}$ has the same indecomposable constituents as $Q^{\otimes (n+1)}$  is $n = 2$,  since $ Q^{\otimes (2)}$ contains all $H$-module indecomposables, which determines the minimum even depth
$d_{ev}(H,D(H)) = 6$.  
\end{abstract} 
\maketitle

\section{Introduction}

The notion of depth has historical roots in the theory of operator algebras, with its most natural definition being in the  ring-theoretic terms of subrings, balanced tensors and bimodules (see the end of Section~2). Specializing to group algebra extensions, the minimum depth of a subgroup $K \leq G$ of a finite group $G$, over a field of characteristic zero, for example, is two if $K$ is normal in $G$, 
or more strongly one if $G=KC_G(K)$, and the odd number $2n+1$ if all simples occur in $n$ applications of  induction-restriction applied to the unit simple (or trivial module), $4$ and $6$ in certain exceptional cases, and it is not known if minimum depth $2n > 6$ can occur.  Also, the (minimum) depth is less than or equal to twice the number of conjugates of $K$ that intersect to equal the core$_G(K)$ \cite[Theorem 6.9]{BKK}. 
The subgroup depth, over $\CC$, of the series of permutation groups $S_n \subseteq S_{n+1}$ is computed
to be $2n - 1$ in \cite{BKK}, having the same value over any  commutative base ring \cite{BDK}.  With a twist automorphism introduced, the depth of the corresponding twisted group algebra series shrinks to $ 2\left(n - \left\lceil \frac{\sqrt{8n+1} - 1}{2} \right\rceil \right) + 1$, as computed in \cite{D}:  a general reason for why depth should shrink when twisting is introduced is given in \cite{HKS}.  The subgroup depth of the alternating group series $A_n \subset A_{n+1}$ is $2\left(n - \lceil \sqrt{n}\,  \rceil \right)+1$ over the complex numbers \cite{BKK} (a precise value is not known in positive characteristic, although it is between the value just given and  $2n - 3$ \cite{BDK}).  
Other results on subgroup depth can be seen in \cite{F1, F2, FKR, HHP, HHP2}. 

The paper \cite{K2014} shows how subgroup depth is determined by the tensor powers of the permutation module of cosets:  this extends to determining subalgebra depth of a Hopf subalgebra $R \subseteq H$ from  its quotient module $Q$ as an $R$-module (see Section~\ref{S:two} for the definition of $Q$).  The depth of $Q_R$  is determined from the least $n$ for which $Q \otimes \cdots \otimes Q$ ($n$ times $Q$, denoted by $Q^{\otimes (n)}$) has the exact same indecomposable constituents as $Q^{\otimes (n+1)}$ (a  chain of subsets  increasing with  $n$).  

Drinfeld's quantum double $D(H)$  of a Hopf algebra $H$ is frequently applied to a group algebra.  The depth of a finite group $G$ in its double $D(G)$ (over $\CC$) is studied in \cite{HKY}, where it is shown to be closely related to the tensor power of the adjoint representation of $G$ on itself at which it is faithful, a topic introduced and explored in \cite{P}. For example, in \cite{P} it is proved that the adjoint representation of the symmetric group on itself is faithful, which rephrased in the terms of $Q$ in this paper, shows that $Q$ and $Q^{\otimes (2)}$ have the same indecomposable constituents, or $Q_G$ has ``depth $1$'' for the group $\CC$-algebra of a symmetric group on $3$ or more letters (in its Drinfeld double with quotient module $Q$).
It is also noted in \cite[Lemma 1.3]{P} that for $G$ set equal to certain semidirect products of elementary abelian $p$- and $q$-groups, where $p,q$ are primes such that $p \| q-1$, the adjoint action of $G$ on itself is not faithful: from this recipe, a group $G$ of order $108$ with $Q$ of depth $2$ in $D(G)$ is constructed in \cite[Example 6.5]{HKY}.  For any semisimple Hopf subalgebra pair $R \subseteq H$, the depth of $Q$ coincides with the length $n$ of the  chain of annihilator ideals of tensor powers of $Q$, i.e., the least $n$ for which $\Ann Q^{\otimes (n)}$ is a Hopf ideal 
 \cite[Theorem 3.14]{K}, \cite{H}.  

The half quantum groups, or Taft algebras $U_n(q)$, are $n^2$-dimensional algebras generated by a group-like element of order $n$ and a nilpotent element of order $n$, with an anti-commutation relation between these two generators involving a primitive $n$-th root-of-unity $q$ in the base field. Like  group algebras, they are Hopf algebras, but unlike complex finite group algebras, they are not semisimple nor cocommutative.  The noncocommutativity is explicit in the skew primitivity of the nilpotent element.  The Drinfeld doubles of the half quantum groups are reduced to the transparent terms of generators and relations in \cite{C}.   The Green rings (or representation rings) of the half quantum groups are determined in \cite{COZ} by means of a computation in \cite{G} of the tensor products of the $n^2$ isoclasses of indecomposables:  the Green ring is shown to be commutative, although the half quantum groups are  not quasitriangular (nor almost cocommutative \cite{CC}).   

Two finite-dimensional modules over a finite-dimensional algebra are similar, denoted with a $\sim$, if they have the same nonzero indecomposable constituents.
In this paper, the quotient module $Q$ for the Taft algebra $U_n(q)$, for any $n \in \N$, $n \geq 2$ and $q$ a primitive $n$-th root of unity in the base field,  is computed in Section~\ref{S:four}, decomposed in Theorem~\ref{T:KSdecQ}, and  its second tensor power is shown in  Theorem~\ref{T:decQQ} to contain all indecomposables of $U_n(q)$, a very strong form of faithfulness; the conclusion is that  $Q \, {\not \sim} \,   Q \otimes Q$, but $Q^{\otimes (2)} \sim
Q^{\otimes (3)}$ as $U_n(q)$-modules. Thus, the depth of $Q$ is $2$, which translates, using \cite[Theorem 5.1]{K2014}, into a minimum even depth $d_{ev}(U_n(q), D(U_n(q))) = 6$ for this Hopf subalgebra pair. 

\section{The general quotient module $Q$ in more detail}\label{S:two}
We introduce the quotient module of certain Hopf algebra-subalgebra pairs, note a generalization of a relative Maschke's theorem \cite[Theorem 3.7]{K} and point out some differences between the  quotient module and its restriction to the subalgebra.  The material in this section is mostly of theoretical interest and some of it may be skipped in reading the main results in Sections~4 and~5.  

Throughout the paper, let $H$ be a finite-dimensional Hopf algebra over a field $k$. Suppose $R$  is a left coideal subalgebra of $H$, i.e., $R$ is a subalgebra and the coproduct satisfies $\cop(R) \subseteq H \otimes R$.  The subalgebra $R$ is augmented by the counit $\eps$ of $H$; let $R^+$ denote the augmentation ideal.  
Define the quotient $H$-module \begin{equation}
Q = H/ R^+H.
\end{equation}
E.g., note that $\overline{r} = \overline{1_H} \eps(r)$
for all $r \in R$, with usual coset notation.  In later sections, we will focus on the $R$-module $Q$ resulting from restriction, but we point out some differences between  $Q_R$ and the cyclic module $Q_H$  in this section. After the next theorem our focus for $R$ falls back to the more restrictive notion of Hopf subalgebra, where $\cop(R) \subseteq R \otimes R$ and $S(R) = R$: thus, $R$ is a Hopf algebra with a restriction of the structure of $H$.  

The quotient module $Q$ is 
also a coalgebra, since $R^+H$ is a coideal, by elementary considerations. 
It satisfies the identity of a right $H$-module coalgebra: $\cop(qh) = q\1 h\1 \otimes q\2 h\2$
 and $\eps_Q(qh) = \eps_Q(q) \eps(h)$, for every $q \in Q, h \in H$. The canonical epimorphism of right $H$-module coalgebras $H \rightarrow Q$ is denoted by $h \mapsto \overline{h}$. 

Let $A$ be any ring, and $B$ be a unital subring of $A$.  Recall that $A$ is a right semisimple extension of  $B$ if every right $A$-module $M$ splits in the kernel exact sequence of the canonical epimorphism $M \otimes_B A 
\rightarrow M$; left semisimple extensions are similarly defined with left modules. Equivalently, short exact sequences of $A$-modules that are $B$-split, also split as $A$-modules.   Note that the $A$-module $M$ is isomorphic to a direct summand of $M \otimes_B A$,
a fact which we denote by $M \| M \otimes_B A$, and $M$ is therefore $B$-relative projective.  A third equivalent condition for semisimple extensions is that all modules are relative projective.  (Note that a projective module $P_A$ has the property that $P \otimes_B A \rightarrow P$ splits for any subalgebra $B$ in $A$.) 

A ring $A$ is a separable extension of a subring $B$ if for every $A$-module $M$ there is a splitting of the canonical epimorphism $M \otimes_B A  \rightarrow M$, natural with respect to $A$-module mappings
$M \rightarrow N$. Equivalently, there is an element
$e = e^1 \otimes_B e^2 \in A \otimes_B A$ satisfying $e^1 e^2 = 1_A$ and $ae = ea$ for all $a \in A$ (suppressing a possible summation $\sum_{(e)} e^1 \otimes_B e^2$, using Sweedler-type notation).  

 We note the following theorem, which generalizes Maschke's theorem for group and Hopf algebras, and has the same proof as in \cite{K}, so the details are omitted. 

\begin{theorem}\label{T:21}
A finite-dimensional Hopf algebra $H$ is a right semisimple extension of a left coideal subalgebra $R$ iff $k_H$ is isomorphic to a direct summand of $Q_H$ iff $k_H$ is $R$-relative projective
iff there is a $q \in Q$ such that $\eps_Q(q) = 1$ and $qh = \eps(h) q$ for each $h \in H$ iff there is $s \in H$ such that $\eps(s) = 1$ and $sH^+ \subseteq R^+H$ iff
$H$ is a separable extension of $R$.
\end{theorem}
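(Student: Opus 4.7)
The plan is to establish all six equivalences through a natural \emph{tensor identity} that turns the relative tensor product $M \otimes_R H$ into an ordinary tensor product involving $Q$, and then read off each condition as a statement about splittings.

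\textbf{Step 1 (tensor identity).} For any right $H$-module $M$, I would exhibit a natural isomorphism of right $H$-modules $\phi_M\colon M \otimes_R H \to M \otimes_k Q$ sending $m \otimes h$ to $m h\1 \otimes \overline{h\2}$, with inverse $m \otimes \overline{h} \mapsto m S(h\1) \otimes h\2$. Well-definedness of $\phi_M$ over $\otimes_R$ requires the left coideal condition $\cop(R) \subseteq H \otimes R$ (so $r\2 \in R$), the identity $\overline{rh} = \eps(r)\overline{h}$ in $Q$, and the counit axiom $r\1 \eps(r\2) = r$; well-definedness of the inverse modulo $R^+H$ uses the antipode identity $S(r\1)r\2 = \eps(r)$. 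Under $\phi_M$, the canonical epimorphism $M \otimes_R H \to M$ transports to the evaluation $M \otimes Q \to M$, $m \otimes q \mapsto m\, \eps_Q(q)$.

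\textbf{Step 2 (equivalences (a)--(e)).} Semisimplicity (a) asserts the splitting of the canonical epimorphism for every $M$; via Step 1, this amounts to splitting $M \otimes Q \to M$ for every $M$, which by naturality reduces to $M = k$ and yields a section $k_H \to Q_H$, i.e., (b). Condition (c) is simply (a) specialized to $M = k$, noting the canonical identification $k \otimes_R H = Q$ as right $H$-modules. A section $k \to Q$ of $H$-modules is determined by $q = \sigma(1)$ satisfying $qh = \eps(h) q$ and $\eps_Q(q) = 1$, which is (d). Writing $q = \overline{s}$ for a lift $s \in H$, the invariance translates to $sh - \eps(h) s \in R^+H$ for all $h \in H$, that is, $sH^+ \subseteq R^+H$, and $\eps_Q(q) = 1$ becomes $\eps(s) = 1$, giving (e); every implication is readily reversed.

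\textbf{Step 3 (equivalence with (f)) and main obstacle.} Separability trivially implies semisimplicity, so the nontrivial direction is producing a separability element from $s$. A natural candidate in $H \otimes_R H$ is $e = S(s\1) \otimes_R s\2$, whose product $S(s\1) s\2 = \eps(s) = 1$ provides the first separability axiom. The main obstacle lies in verifying the bimodule-centering property $he = eh$ for all $h \in H$: this must be extracted from $sH^+ \subseteq R^+H$ combined with $\cop(R) \subseteq H \otimes R$, and care is required because $R$ need not be stable under the antipode. Alternatively, one can argue abstractly: the section $k \to Q$ furnished by (b) tensors to a natural family of splittings of $M \otimes_R H \to M$, and by an Eilenberg--Watts style argument such a natural family corresponds to a bimodule-central element of $H \otimes_R H$ with product $1_H$, i.e., a separability element. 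This naturality-based route sidesteps the explicit bimodule calculation and cleanly closes the loop back to (a).
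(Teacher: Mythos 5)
Your proposal is correct and follows essentially the same route as the paper: the same tensor identity $M \otimes_R H \cong M \otimes Q$ with the same explicit maps, the same chain of reductions through the splitting $\sigma\colon k_H \to Q_H$ and the element $q = \overline{s}$, and the same separability element $S(s\1) \otimes_R s\2$, with your naturality-based verification of the centering property being exactly how the paper (following \cite{K}) closes the loop via the natural isomorphism \eqref{eq: induct}.
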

\begin{proof}
The module structure on $k_H$ is of course $1 . h = \eps(h)$.  Note that $Q \cong k \otimes_R H$ via $1 \otimes h \mapsto \overline{h}$; thus $Q$ is $R$-relative projective by a standard exercise.  Moreover, for any right $H$-module $M$, the $H$-module $M \otimes Q$ is given by the diagonal action $(m \otimes q)h = mh\1 \otimes qh\2$, and we have the natural isomorphism of right $H$-modules,
\begin{equation}
\label{eq: induct}
M \otimes_R H \cong M \otimes Q
\end{equation}
given by  $m \otimes_R h \mapsto mh\1 \otimes \overline{h\2}$ with inverse
$m \otimes \overline{h} \mapsto mS(h\1) \otimes_R h\2$. 
The counit of $Q$ is $R$-split by $\lambda \mapsto \lambda \overline{1_H}$ as noted above.   If $k_H$ is relative projective, the counit of $Q$ splits over $H$.
The element $q = \overline{s}$ is the image of a splitting $\sigma: k_H \rightarrow Q_H$.  
It follows from the natural isomorphism \eqref{eq: induct} that every module is $R$-relative projective
and indeed that $S(s\1) \otimes_R s\2$ is a separability element.
\end{proof}
Note that Eq.~(\ref{eq: induct}) implies that all tensor powers of $Q$ are relative projective, and that the class of relative projective $H$-modules enjoys the ideal property that $M \otimes V$ is relative projective if $V$ is relative projective, and $M$ is any module.  The property of separability, or its absence, is usually easy to detect in a subalgebra pair; e.g., global dimension satisfies  $\operatorname{gldim}(B) \geq \operatorname{gldim}(A)$ for a projective separable extension $B \subseteq A$.  Thus a nonsemisimple Hopf algebra is never a semisimple extension of a semisimple Hopf subalgebra.  Furthermore, a separable extension of Hopf algebras is an ordinary Frobenius extension, where the modular functions are related by restrctiction \cite[Corollary 3.8]{K}. Thus, a non-unimodular Hopf algebra (like the Taft algebra) does not form a semisimple extension with its Drinfeld double (by \cite[Radford's Theorem 6.10]{NEFE}). 

Below we use the notation $V_H \| W_H$ (suggested by the Krull-Schmidt Theorem) if $V$ is a direct summand of $W$ up to module isomorphism, sometimes also denoted by
$V_H \oplus * \cong W_H$. 
\begin{cor}
Suppose a finite-dimensional Hopf algebra extension $H \supseteq R$ is not a semisimple extension. Then $k_H$ is not a direct summand of any tensor power of $Q_H$ up to isomorphism
(i.e.,  $k_H {\not  |} Q^{\otimes (n)}$ for any $n \in \N$).   If $W_H$ satisfies $k_H \| W^{\otimes (n)}$
for some $n \in \N$, then $W_H {\not |} Q^{\otimes (m)}$ for any $m \in \N$.  
\end{cor}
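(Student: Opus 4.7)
The plan is to reduce both assertions to Theorem~\ref{T:21}, exploiting the closure properties of $R$-relative projective $H$-modules recorded in the paragraph just after its proof.

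For the first assertion, I argue by contradiction: suppose $k_H \| Q^{\otimes (n)}$ for some $n \in \N$. Iterating the natural isomorphism~\eqref{eq: induct} shows that $Q^{\otimes (n)} \cong Q^{\otimes (n-1)} \otimes_R H$ is induced from an $R$-module, hence is $R$-relative projective. Since the class of $R$-relative projective modules is closed under direct summands (directly from the characterization as direct summands of modules induced from $R$), $k_H$ inherits that property. But Theorem~\ref{T:21} gives the equivalence of ``$k_H$ is $R$-relative projective'' with ``$H$ is a right semisimple extension of $R$,'' contradicting the hypothesis.

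For the second assertion, suppose that $W_H \| Q^{\otimes (m)}$ for some $m \in \N$. Tensoring over $k$ preserves the direct summand relation, so taking the $n$-th tensor power gives $W^{\otimes (n)} \| (Q^{\otimes (m)})^{\otimes (n)} = Q^{\otimes (mn)}$. Combining with the hypothesis $k_H \| W^{\otimes (n)}$ and transitivity of ``is a direct summand of,'' we obtain $k_H \| Q^{\otimes (mn)}$, in direct contradiction with the first assertion.

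There is essentially no computational obstacle here; the main conceptual point is to recognize that ``$k_H$ is $R$-relative projective'' (one of the equivalent conditions in Theorem~\ref{T:21}) is the correct bridge, and that the parenthetical observation that every tensor power of $Q$ is $R$-relative projective is exactly what forces any direct summand of $Q^{\otimes (n)}$ to inherit the relative projectivity that would force semisimplicity of the extension.
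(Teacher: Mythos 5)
Your proof is correct and follows essentially the same route as the paper's: the paper's (very terse) argument likewise observes that every tensor power of $Q_H$ is $R$-relative projective via Eq.~(\ref{eq: induct}), that direct summands inherit relative projectivity, and that Theorem~\ref{T:21} then forces the extension to be semisimple. Your handling of the second assertion by tensoring the divisibility $W_H \| Q^{\otimes (m)}$ to get $k_H \| W^{\otimes(n)} \| Q^{\otimes (mn)}$ is the natural way to fill in what the paper leaves implicit.
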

\begin{proof}
A direct summand of a relative projective, such as $Q_H$, is relative projective.  By Theorem~\ref{T:21}, the extension is semisimple if $k_H$ is relative projective.
\end{proof}
   The restricted module $Q_R$ then is very different than $Q_H$, since $k_R \| Q_R$ always, as noted above.  We will focus only on the restricted module $Q_R$ below. 

\subsection{The quotient module and quantum subgroup depth}  
For any ring $A$, and $A$-module $X$, let $1 \cdot X = X$, $2 \cdot X = X \oplus X$, etc.  The similarity relation $\sim$ mentioned in the introducion  may be defined on $A$-modules as follows.  Two $A$-modules $M,N$ are similar, written $X \sim Y$, if $X \| n \cdot Y$ and $Y \| m \cdot X$ for some positive integers $m, n$.  This is an equivalence relation, and carries over to isoclasses in the Grothendieck group of $A$, or the Green ring if $A$ is a (quasi-) Hopf algebra.  If $M \sim N$ and $X$ is an $A$-module,
then we have $M \oplus X \sim N \oplus X$; if $\otimes$ is a tensor on mod-$A$, then also $M \otimes X \sim N \otimes X$. In case $A$ is a finite-dimensional algebra, $M \sim N$ if and only if $\Indec (M) = \Indec (N)$, where $\Indec (X)$ denotes the set of isoclasses of the indecomposable module constituents of $X$ in its Krull-Schmidt  decomposition.  

If $A $ is a finite-dimensional Hopf algebra, and $r(A)$ denotes the Green ring of $A$, then $r(A)$ has a preferred basis as a $\Z$-algebra given by the isoclasses of all indecomposable $A$-modules; the algebra is of finite rank iff
$A$ has finite representation type.  The set of nonzero linear combinations of indecomposables with non-negative integer coefficients we call the \textit{positive quadrant}; these elements correspond to the (isoclasses of) actual $A$-modules.  The
relations $\|$ and $\sim$ are meaningful in the positive quadrant: say $a,b,c$ are elements there, then $a \sim b$
(so $a \| nb$ and $b \| ma$ for some $m, n \in \N$) 
implies $a + c \sim b + c$ (but not conversely unless $\Indec (c) \cap \Indec (a) = \emptyset = \Indec (c) \cap \Indec (b)$),  $ac \sim bc$, $3a \sim 5 b$, and others.  Also, $a \sim b$ and
$c \sim d$ implies $ac \sim bd$ and $a + c \sim b + d$.  

Given an $a \in r(A)$ in the positive quadrant, define for each $m \geq 1$, the polynomial $p_m(a) = a + \cdots + a^m$, noting that
$p_m(a) \| p_{m+1}(a)$. Let $p_0(a) = 1$.  
The \textit{depth} of $a$ (or any module in its isoclass) is the least $n \in \N$ for which
$p_n(a) \sim p_{n+1}(a)$ if such exists, denoted by $d(a) = n$; otherwise $d(a) = \infty$. Note that $p_n(a) \sim p_{n+1}(a)$ implies that $p_n(a) \sim p_{n+r}(a)$, for any $r \in \N$, by an exercise.   Elements in $r(A)$ that represent algebras or coalgebras in the tensor category mod-$A$ of finite-dimensional $A$-modules, such as the quotient module $Q$ defined above for $A = H$ with Hopf subalgebra $R$, with isoclass denoted by  $\tilde{q}$, satisfy $\tilde{q}^i\| \tilde{q}^{i+1}$ for all $i \in \N$, a fact 
which follows from applying the counit and comultiplication, or the unit and multiplication. In this case,
$p_n(\tilde{q}) \sim \tilde{q}^n$, and the depth $n$ condition is replaceable by $\tilde{q}^n \sim \tilde{q}^{n+1}$.   If $A$ has finite representation type, it is  clear that an algebra or coalgebra such as $\tilde{q}$ has finite depth; and indeed any element $a$ in the positive quadrant has finite depth. 

Note that depth $d(\tilde{q}) = 0$ in the Green ring of $R$ if and only if
$R$ is a normal Hopf subalgebra of $H$, i.e. $R^+H = HR^+$. Then $\tilde{q} \sim 1$ by recalling the right module $Q = H/R^+H$.    

Suppose $r(A)$ is commutative, which is the case if $H$ is quasi-triangular or braided \cite{CK}. Let $a,b \in r(A)$ be two elements of finite depth in the positive quadrant.  Then $ab$ and $a + b$ have finite depth.   It is noted in \cite{F} that elements of finite depth are algebraic elements in the Green ring
of a cocommutative Hopf algebra,
and conversely by a paraphrasing;  i.e., $d(a) < \infty$ $\Leftrightarrow$ $g(a) = 0$ for some nonzero polynomial $g(x) \in \Z[x]$. 
Then $d(ab) < \infty$ and $d(a + b) < \infty$ follows in an exercise, or as standard textbook material on  the set of algebraic elements in a commutative ring forming a subring.  

Consider the special case of a subgroup pair $K \leq G$ where $G$ is a finite group. Let $R =kK$ and $H = kG$ be the group algebra extensions over a ground field $k$ of any characteristic; then $Q \cong k[K \setminus G]$, the right coset space, by  noting that $(1 - k')g  \in R^+H$ are the generators for each $k' \in K, g \in G$.  It is noted in \cite[Chapter 9]{F} that $\tilde{q} \in r(H)$  has finite depth, i.e., the isoclass of any permutation module, such as $\tilde{q}$, is an algebraic element in the Green ring of $G$.   

Based on simplifications made in the literature (see the preliminaries in \cite{K}), the \textit{subgroup depth} $d_k(K,G)$ is a positive integer equal to, or one less than, $d_{ev}(kK,kG) = 2d(Q_R) + 2$, the minimum even depth, and satisfying $$ d_h(kK,kG) - 2 \leq d_k(K,G) \leq d_h(kK,kG) + 1,$$
where the h-depth $d_h(kK,kG) = 2d(Q_H) + 1$.  The two equalities of depth are explained and proven in \cite[Theorem 5.1]{K2014} for any Hopf subalgebra $R \subseteq H$:
\begin{eqnarray}
\label{eq: h}
d_h(R,H) & = & 2d(Q_H) + 1; \\
\label{eq: ev}
d_{ev}(R,H) & = & 2d(Q_R) + 2.
\end{eqnarray}
The minimum odd depth is not determined by this approach; in case $H$ is semisimple, there is a nice graphical technique described in \cite{BKK} for its determination.  
The minimum h-depth, even depth and odd depth $d_{odd}(R,H)$ may be viewed as closely related (to within two) and on an equal footing:  subalgebra depth $d(R,H)$ may be even and equal to $d_{ev}(R,H)$, 
or odd and equal to $d_{odd}(R,H) = d_{ev}(R,H) - 1$.
 From Feit's theorem in group representation theory, it follows that  $d_k(K,G) < \infty$, a result with interesting upper bounds given in \cite{BDK}.

This paper will not make use of the most general and full definition of depth in \cite{BDK} and h-depth in \cite{K2012}.  For the reader's convenience, we sketch the general definition of depth, the minimum depth $d(B,A)$,and h-depth $d_h(B,A)$ of a subring $B$ in a ring $A$: they are defined  as follows by a positive integer, or infinity, in terms of the natural
$A$-$A$-bimodule structure on the tensor powers $A^{\otimes_B (n)}$ (where $n \in \N$ and  $A^{\otimes_B (0)} = B$), and its restrictions to the three other combinations of $B$- and $A$-bimodules. If
 \begin{equation}
\label{eq: finitedepth}
A^{\otimes_B (m)} \sim A^{\otimes_B (m+1)}
\end{equation}
 for a nonnegative  integer  $m$ for which this similarity holds in one of the four bimodule structures mentioned above, we say the subring $B \subseteq A$, or the ring extension $A \supseteq B$, has h-depth $2m-1= 1,3,5,\ldots$ in
the $A$-$A$-bimodule structure, the ring extension has depth  $2m =2,4,6,\ldots$ in the $A$-$B$- or $B$-$A$-bimodule structures, and the ring extension has depth 
$2m+1=1,3,5,\ldots$ in the $B$-$B$-bimodule structure.  Having depth $2m$ implies that the subring also has h-depth $2m+1$ by tensoring the similarity, and depth $2m+1$ by restriction of the similarity. Also, having h-depth $2m+1$ implies that the subring has depth $2m+2$, and having depth $2m+1$ implies having depth $2m+2$.  Then define  $d(B,A)$ as the minimum such natural number as well as the minimum h-depth $d_h(B,A)$, an odd positive integer, unless  there is no such $m \in \N$ satisfying Eq.~(\ref{eq: finitedepth}), in which case we may write
$d(B,A) = \infty = d_h(B,A)$. Note then that \begin{equation}
\label{eq: ineq}
-2 \leq d_h(B,A) - d(B,A) \leq 1
\end{equation}
 if one of these is finite (also noted above for group algebras).

For example, we note a shorter proof of \cite[Corollary 3.3]{K2014}, which states
that an h-separable Hopf algebra extension $H \supseteq R$ is a trivial extension.
For h-separability is equivalent to $d_h(R,H) = 1$, which is equivalent to the quotient module $Q_H$ having depth $0$, i.e., $Q \sim k_H$. Thus $\overline{h} = \overline{1_H}h = \eps(h)\overline{1_H}$ for all $h \in H$. But $Q$ is cyclic:
$Q = \{ \overline{h} \| h \in H \} = k\overline{1}$.  Thus, $\dim Q = 1 = \dim H/ \dim R$.  Hence, $R = H$.  
 
The next lemma follows from the considerations about general depth above and the inequality~(\ref{eq: ineq}), Eqs.~(\ref{eq: h}) and~(\ref{eq: ev}), and is left as an exercise. 
\begin{lemma}. 
The depth of the $H$-module $Q$ and its restricted module satisfy
$$ d(Q_H) - d(Q_R) \in \{ 0,1 \}. $$
Both values are attained. 
\end{lemma}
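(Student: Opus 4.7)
The plan is essentially arithmetic, combining the two identities Eqs.~(\ref{eq: h}) and~(\ref{eq: ev}) with a sandwich relating $d_h(R,H)$ and $d_{ev}(R,H)$ that is implicit in the paragraph preceding inequality~(\ref{eq: ineq}). Specifically, the assertions there that ``depth $2m$ implies h-depth at most $2m+1$'' and ``h-depth $2m+1$ implies depth at most $2m+2$'', applied to the Hopf subalgebra pair $R \subseteq H$ in both directions, yield
$$ d_{ev}(R,H) - 1 \;\leq\; d_h(R,H) \;\leq\; d_{ev}(R,H) + 1. $$
Substituting $d_h(R,H) = 2d(Q_H)+1$ and $d_{ev}(R,H) = 2d(Q_R)+2$ and simplifying collapses this to $2d(Q_R) \leq 2d(Q_H) \leq 2d(Q_R)+2$, i.e., $d(Q_H) - d(Q_R) \in \{0,1\}$.

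For the attainment of both values, I would exhibit one explicit witness for each. The value $0$ is immediately realized by the trivial extension $R=H$: there $Q = k$ and $d(Q_H) = d(Q_R) = 0$. The value $1$ is realized by any proper normal Hopf subalgebra $R \subsetneq H$: normality is equivalent to $R^+H = HR^+$, and the remark in the paragraph preceding the lemma (just after Eq.~(\ref{eq: ev})) gives $d(Q_R) = 0$; on the other hand, $Q_H$ is cyclic, so a similarity $Q_H \sim k_H$ would force the constituents of $Q$ to consist solely of the trivial module and hence $Q_H \cong k_H$ as a module, giving $\dim H = \dim R$, i.e., $R = H$, contradicting properness. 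Thus $d(Q_H) \geq 1$, and the sandwich just proved pins the difference at exactly $1$.

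The only real task is bookkeeping: verifying the two one-step implications between even depth and h-depth in both directions, so that the resulting inequalities assemble cleanly into the two-sided sandwich used above. Beyond that, there is no substantive obstacle; the Taft-algebra pair $U_n(q) \subseteq D(U_n(q))$ studied in later sections provides an additional concrete test case, since the value $d(Q_R) = 2$ established in the main theorem forces $d(Q_H) \in \{2,3\}$ by the containment proved here.
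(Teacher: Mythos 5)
Your argument is correct and is essentially the paper's intended one: the lemma is left as an exercise there, to be assembled precisely from the one-step implications between even depth and h-depth (giving $d_{ev}(R,H)-1\leq d_h(R,H)\leq d_{ev}(R,H)+1$) together with Eqs.~(\ref{eq: h}) and~(\ref{eq: ev}), which is exactly your sandwich. For attainment your witnesses are valid, though note that your value-$0$ witness is the degenerate pair $R=H$, whereas the paper's remark points to the nontrivial example $\CC A_4\subset \CC A_5$; your normal-subalgebra argument for the value $1$ (using cyclicity of $Q_H$ and $\dim Q=\dim H/\dim R$) matches the reasoning the paper uses elsewhere for h-separable extensions.
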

\begin{remark}
\begin{rm}
Restricting $Q^{\otimes (n)} \sim Q^{\otimes (n+1)}$ from $H$-modules to $R$-modules shows
that $d(Q_H) \geq d(Q_R)$.  The value $d(Q_H) - d(Q_R) = 1$ is very usual when $H \neq R$; however, $d(Q_H) = d(Q_R)$ occurs when $R,H$ are the complex group algebras of the alternating groups, $A_4, A_5$
\cite{BKK, BDK, K2012}. 
\end{rm}
\end{remark}
\section{Preliminaries on the half quantum group} 

We keep our previous assumption that $k$ is any field, but assume further that $0\neq q\in k$ is a primitive $n$-th root of unity with $n \geq 2$. The latter implies, in particular, that the characteristic of $k$ does not divide $n$. Define the Taft algebra by
$$U_n(q) = k\bra G,X \, \| G^n = 1, X^n = 0, GX = qXG \ket.$$
This is an $n^2$-dimensional algebra with basis $\{ G^i X^j \| i,j=0,1,\ldots,n-1 \}$.  It has a coalgebra structure given by $\cop(G) = G \otimes G$, $\cop(X) = X \otimes G + 1 \otimes X$, $\eps(G) = 1$, and $\eps(X) = 0$.  There is also a Hopf algebra antipode given by $S(G) = G^{-1}$ and $S(X) = -XG^{-1}$ \cite{SY}. 

The quantum double $D(U_n(q))$ is given in terms of generators and relations in \cite{C} by
\begin{eqnarray*}
D(U_n(q)) &=&  k\bra a,b,c,d \, \|  a^n = 0 = d^n,  b^n=1 = c^n,  ba = qab,   \\ 
& &  \hspace{-15pt}  dc = qcd, db = qbd, bc = cb, ca = qac, da - qad = 1 - bc \ket.
\end{eqnarray*}
A basis for $D(U_n(q))$ is given by $\{ a^i b^j c^r d^s \| i,j,r,s = 0,1,\ldots, n-1 \}$. 
The Hopf algebra structure in \cite{C} is not needed below, but shows that the  Hopf subalgebra
 generated by $a,b$ is isomorphic to $U_n(q)$.  We view this as the embedding $G \mapsto b$
and $X \mapsto a$ for the purposes of computing depth (a Morita invariant in terms of Morita invariance of ring extensions \cite{K}).  The Hopf algebra $D(U_n(q))$, and its quotient in  the small quantum groups, is further discussed in \cite[Chapter 9]{CK}, and for $n=2$ below in Example~\ref{E:51}.

\subsection{Indecomposable modules of $U_n(q)$}
The principal modules, or projective indecomposables, of the half quantum group $U_n(q)  = \bra a,b \, \| b^n = 1, a^n = 0, ba = qab \ket$ are determined from a basic set of primitive idempotents given by $e_i = \frac{1}{n}\sum_{j=0}^{n-1} q^{(i-1)j}b^j$, for $i =  1, \ldots, n$.
The projective indecomposables are thus the $n$-dimensional modules $P_1 = e_1U_n(q)$, $P_2 = e_2U_n(q), \ldots, P_n = e_n U_n(q)$.
Each $P_i$ is the projective cover of the one-dimensional simple module $S_i = P_i / P_i \operatorname{rad}(U_n(q))$, which has eigenvalue
$q^{n - i +1}$ from the action of the grouplike $b$ and is annihilated by $a$.  

The radical ideal $J=\operatorname{rad}(U_n(q))$ is generated by the nilpotent element $a$: $J = aU_n(q)$ with
$J^n = 0$, but $J^{n-1} \neq 0$.   
The Loewy length of each $P_i$ is $n$ and equal to its composition length: the algebra is Nakayama (or serial) \cite{SY}.  The composition series of each $P_i$ is given by $$P_i \supset P_i J \supset P_i J^2 \supset \cdots \supset P_i J^{n-1} \supset \{ 0 \} .$$
The indecomposable module isoclasses of $U_n(q)$ are represented by
$P_iJ^{r-1}$, for $i,r = 1,\ldots,n$ \cite{SY}.   

\subsection{Preliminaries on the $q$-binomial coefficients}

We recall the definition and some basic properties of the $q$-binomial coefficients, also known as Gauss polynomials. For any integer $j\geq 1$, set $(j)_q=1+q+\cdots +q^{j-1}$, $(j)!_q=(j)_q\cdots (1)_q$ and $(0)!_q=1$. Note that, for $q\neq 1$,  $(j)_q=0$ if and only if $q^j=1$.  Finally, define the $q$-binomial coefficients ${k\choose j}_q$ inductively as follows, for $k\geq j\geq 0$:
\begin{gather*}
{k\choose 0}_q=1={k\choose k}_q \quad \mbox{for $k\geq 0$,}\\[5pt]
{k\choose j}_q=q^j {k-1\choose j}_q+{k-1\choose j-1}_q \quad \mbox{for $k> j> 0$.}
\end{gather*}
These $q$-binomial coefficients are thus polynomials in $q$ with integer coefficients which agree with the corresponding binomial coefficients when $q=1$. Furthermore, whenever $k> j> 0$ and $(k-1)!_q\neq 0$ we have $\displaystyle {k\choose j}_q=\frac{(k)!_q}{(j)!_q (k-j)!_q}$.

It will be convenient to add the following notational conventions: for all $k\geq 0$ define $\displaystyle {k-1\choose -1}_q=0$, $\displaystyle {k\choose k+1}_q=0$ and $\displaystyle {-1\choose 0}_q=1$. With these conventions, the recurrence relation above can be extended to
\begin{equation}\label{E:recurrence}
{k\choose j}_q=q^j {k-1\choose j}_q+{k-1\choose j-1}_q \quad \mbox{for all $k\geq j\geq 0$.}
\end{equation}
We need a further fact concerning the $q$-binomial coefficients:

\begin{lemma}\label{L:qbinom}
Let $n\geq 2$  and assume $q$ is a primitive $n$-th root of unity. If $\alpha$ and $\beta$ are integers satisfying $\alpha\geq n$, $1\leq \beta\leq n-1$ and $\alpha-\beta<n$, then $\displaystyle {\alpha \choose \beta}_q=0$.
\end{lemma}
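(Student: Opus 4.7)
The proof plan is to induct on $\alpha$, using the recurrence \eqref{E:recurrence} as the engine and handling the base case $\alpha=n$ by the closed formula for the $q$-binomial coefficient.

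For the base case $\alpha=n$, the condition $1\leq \beta\leq n-1$ gives $1\leq \beta \leq n-1$ and $1\leq n-\beta\leq n-1$, so every factor appearing in $(\beta)!_q$ and in $(n-\beta)!_q$ is of the form $(j)_q$ with $1\leq j\leq n-1$. Since $q$ is a primitive $n$-th root of unity and $n\geq 2$, we have $q^j\neq 1$ for such $j$, whence $(j)_q\neq 0$. In particular $(n-1)!_q\neq 0$, so the closed formula $\binom{n}{\beta}_q=\frac{(n)!_q}{(\beta)!_q(n-\beta)!_q}$ applies; the numerator carries the factor $(n)_q=1+q+\cdots+q^{n-1}=0$, and hence $\binom{n}{\beta}_q=0$.

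For the inductive step, fix $\alpha\geq n+1$ and assume the lemma holds for $\alpha-1$. Apply the recurrence
\[
\binom{\alpha}{\beta}_q = q^\beta\binom{\alpha-1}{\beta}_q+\binom{\alpha-1}{\beta-1}_q.
\]
For the first summand, verify that the triple $(\alpha-1,\beta)$ still satisfies the hypotheses: $\alpha-1\geq n$, $1\leq\beta\leq n-1$ is unchanged, and $(\alpha-1)-\beta=\alpha-\beta-1<n$. So by induction this term vanishes. For the second summand, I first note that $\alpha\geq n+1$ and $\alpha-\beta<n$ together force $\beta\geq 2$; hence $1\leq \beta-1\leq n-2\leq n-1$. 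The triple $(\alpha-1,\beta-1)$ then satisfies $\alpha-1\geq n$, $1\leq \beta-1\leq n-1$, and $(\alpha-1)-(\beta-1)=\alpha-\beta<n$, so the inductive hypothesis applies and this term is also zero.

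The only subtle point is the second term in the inductive step, because of the extension convention $\binom{\alpha-1}{-1}_q=0$ and the fact that for $\beta=1$ the term would be $\binom{\alpha-1}{0}_q=1$. The observation that the hypothesis $\alpha-\beta<n$ forces $\beta\geq \alpha-n+1\geq 2$ precisely when $\alpha\geq n+1$ eliminates this issue and isolates $\beta=1$ to the base case. With this, both terms vanish and the induction closes.
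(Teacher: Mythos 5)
Your proof is correct and follows essentially the same route as the paper: the base case $\alpha=n$ via the closed formula $\binom{n}{\beta}_q=\frac{(n)!_q}{(\beta)!_q(n-\beta)!_q}$ with $(n)_q=0$, and the inductive step via the recurrence after observing that $\alpha\geq n+1$ together with $\alpha-\beta<n$ forces $\beta\geq 2$. The paper's proof is the same argument, only stated more tersely.
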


\begin{proof}
The proof is a straightforward induction on $\alpha\geq n$. Since $(n-1)!_q\neq 0$ we have  $\displaystyle {n\choose \beta}_q=\frac{(n)!_q}{(\beta)!_q (n-\beta)!_q}=0$. So the case $\alpha=n$ holds and we can assume $\alpha\geq n+1$. Thus  $\beta\geq 2$  is implied by the condition $\alpha-\beta<n$. The induction hypothesis thus yields $\displaystyle {\alpha-1\choose \beta}_q=0={\alpha-1\choose \beta-1}_q$, and the recurrence relation 
$\displaystyle{\alpha\choose \beta}_q=q^\beta {\alpha-1\choose \beta}_q+{\alpha-1\choose \beta-1}_q$ proves the inductive step.
\end{proof}

\section{The $U_n(q)$-module $Q$}\label{S:four}

We view the Taft algebra $U_n(q)$ as the Hopf subalgebra of its quantum double 
\begin{align*}
D(U_n(q))=k\langle\, a, b, c, d \mid  a^n=0=d^n, b^n=1=c^n, ba=qab, dc=qcd, &\\ db=qbd, bc=cb, ca=qac, da-qad=1-bc \, \rangle &
\end{align*}
generated by $a$ and $b$, so that $U_n(q)=k\langle a, b \mid a^n=0, b^n=1, ba=qab \rangle$. Then for the Hopf subalgebra pair $U_n(q)\subseteq D(U_n(q))$ the corresponding quotient module is 
\begin{equation*}
Q=D(R)/R^+ D(R),
\end{equation*}
with $R = U_n(q)$ and $R^+=\ker\,\epsilon=(1-b)R+aR$. 

The right $R$-module $Q$ is $n^2$-dimensional and has basis $\{ \ee{i, j} \mid  0\leq i, j\leq n-1\}$, where $\ee{i, j}=c^i d^j+R^+ D(R)\in Q$ and $\ee{i+n, j}=\ee{i, j}$ for all $i\in\ZZ$, as $c^n=1$. The right action of $R$ on $Q$ is determined by:
\begin{align}\label{E:b_action}
 \ee{i, j}.b &=q^j\ee{i, j},\\ \label{E:a_action}
 \ee{i, j}.a &=(j)_q(\ee{i, j-1}-q^{j-1}\ee{i+1, j-1}),
\end{align}
for all $i\in\ZZ$ and all $0\leq j\leq n-1$, with the additional convention that $\ee{i, -1}=0$.

The following computation will be used frequently so we record it in a lemma. The proof is routine and is omitted. 

\begin{lemma}\label{L:a_action}
Fix $i_0\in\ZZ$ and let $X=\sum_{i=i_0}^{i_0 +n-1} \lambda_i \ee{i, j}$ be an element of $Q$, with $\lambda_i\in k$ for all $i_0\leq i\leq i_0 +n-1$. Define $\lambda_{i_0 -1}=\lambda_{i_0 +n-1}$. Then
\begin{equation*}
X.a=(j)_q\sum_{i=i_0}^{i_0 +n-1} (\lambda_i -q^{j-1}\lambda_{i-1})\ee{i, j-1}.
\end{equation*}
\end{lemma}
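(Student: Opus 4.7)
The plan is to unfold the right action term by term using linearity and identity~\eqref{E:a_action}, then reindex one of the resulting sums and invoke the periodicity $\ee{i+n,j-1}=\ee{i,j-1}$ (which follows from $c^n=1$) to collect coefficients on a common basis vector $\ee{i,j-1}$.

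First I would apply linearity and~\eqref{E:a_action} to write
\[
X.a \;=\; \sum_{i=i_0}^{i_0+n-1}\lambda_i\,\ee{i,j}.a \;=\; (j)_q\sum_{i=i_0}^{i_0+n-1}\lambda_i\,\ee{i,j-1}\;-\;(j)_q q^{j-1}\sum_{i=i_0}^{i_0+n-1}\lambda_i\,\ee{i+1,j-1}.
\]
For the second sum I would substitute $i'=i+1$, so it becomes $\sum_{i'=i_0+1}^{i_0+n}\lambda_{i'-1}\ee{i',j-1}$. The $i'=i_0+n$ term equals $\lambda_{i_0+n-1}\ee{i_0+n,j-1}=\lambda_{i_0+n-1}\ee{i_0,j-1}$ by $c^n=1$, and under the convention $\lambda_{i_0-1}=\lambda_{i_0+n-1}$ this is exactly the ``missing'' $i'=i_0$ contribution $\lambda_{i_0-1}\ee{i_0,j-1}$. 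Hence the second sum can be rewritten as $\sum_{i=i_0}^{i_0+n-1}\lambda_{i-1}\ee{i,j-1}$.

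Combining the two sums term by term on the common basis $\{\ee{i,j-1}:i_0\le i\le i_0+n-1\}$ yields
\[
X.a \;=\; (j)_q\sum_{i=i_0}^{i_0+n-1}(\lambda_i-q^{j-1}\lambda_{i-1})\,\ee{i,j-1},
\]
as required. The only subtlety is the cyclic reindexing step; the convention $\lambda_{i_0-1}=\lambda_{i_0+n-1}$ is precisely designed to make this bookkeeping transparent, so no genuine obstacle arises and the argument is mechanical.
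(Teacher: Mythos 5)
Your computation is correct and is precisely the routine argument the paper has in mind (the paper explicitly omits the proof as ``routine''): expand via linearity and Eq.~\eqref{E:a_action}, shift the index in the second sum, and use the periodicity $\ee{i+n,j-1}=\ee{i,j-1}$ together with the convention $\lambda_{i_0-1}=\lambda_{i_0+n-1}$ to wrap the boundary term around. Nothing is missing.
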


In order to describe the  Krull-Schmidt decomposition of the $R$-module $Q$, we define a new basis for $Q$. Let $\ell=1, \ldots, n$ and define
\begin{align*}
\uu{\ell}&=\sum_{i=1}^n q^{i(\ell-1)}{i+n-2\choose n-1}_q \ee{i, \ell-1},\\[5pt]
\ww{\ell}&=\sum_{i=1}^n q^{-i(\ell+1)}{i+\ell-2\choose \ell-1}_q \ee{i, n-1}.
\end{align*}
It follows from Lemma~\ref{L:qbinom} that $\uu{\ell}=q^{\ell-1}\ee{1, \ell-1}$, but it will be more convenient to use the defining expression for $\uu{\ell}$ given above. Also note that $\uu{n}=\ww{n}$.

\begin{lemma}\label{L:a_action_t_w}
For all $1\leq\ell\leq n$ and all $0\leq r\leq \ell-1$ the action of $a\in U_n(q)$ on the elements $\uu{\ell}$ and $\ww{\ell}$ is given by the following expressions:
\begin{enumerate}
\item[\textup{(a)}] $\displaystyle \uu{\ell}.a^r=\frac{(\ell-1)!_q}{(\ell-r-1)!_q}\sum_{i=1}^n q^{i(\ell-1)}{i+n-2-r\choose n-1-r}_q \ee{i, \ell-1-r}$,\\[5pt]
\item[\textup{(b)}] $\displaystyle \ww{\ell}.a^r=\frac{(n-1)!_q}{(n-r-1)!_q}\sum_{i=1}^n q^{-i(\ell+1)}{i+\ell-2-r\choose \ell-1-r}_q \ee{i, n-1-r}$.
\end{enumerate}
Moreover, the elements $\uu{\ell}.a^{\ell-1}$ with $1\leq \ell\leq n$ are a basis of $\bigoplus_{i=0}^{n-1} k \ee{i, 0}$,
$\uu{\ell}.a^r\neq0$,  $\ww{\ell}.a^r\neq 0$ for all $0\leq r\leq \ell-1$ and $\uu{\ell}.a^\ell=0=\ww{\ell}.a^\ell$.
\end{lemma}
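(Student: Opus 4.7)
The plan is to establish parts (a) and (b) by induction on $r$, with the base case $r=0$ amounting to the definitions of $\uu{\ell}$ and $\ww{\ell}$. For the inductive step I would apply $a$ once more to the formula at stage $r$ via Lemma~\ref{L:a_action}, setting $j=\ell-1-r$ in (a) and $j=n-1-r$ in (b), and then collect coefficients. In case (a), each resulting coefficient of $\ee{i,\ell-2-r}$ takes the form
\[
q^{i(\ell-1)}\left[{i+n-2-r \choose n-1-r}_q - q^{-1-r}{i+n-3-r \choose n-1-r}_q\right],
\]
and the task is to recognise this as $q^{i(\ell-1)}{i+n-3-r \choose n-2-r}_q$. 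The crucial observation is that $-1-r \equiv n-1-r \pmod{n}$, so $q^{-1-r}=q^{n-1-r}$, and the bracket becomes precisely the left-hand side of the recurrence~(\ref{E:recurrence}) applied with $k=i+n-2-r$ and $j=n-1-r$. The verification for (b) is entirely analogous: after reducing $q^{n+\ell-1-r}$ to $q^{\ell-1-r}$, one applies (\ref{E:recurrence}) with $k=i+\ell-2-r$ and $j=\ell-1-r$. The factorial prefactor updates correctly via the identity $(\ell-r-1)_q (\ell-r-2)!_q=(\ell-r-1)!_q$ and its counterpart $(n-r-1)_q(n-r-2)!_q=(n-r-1)!_q$ for (b).

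For the basis assertion, specialising (a) to $r=\ell-1$ gives
\[
\uu{\ell}.a^{\ell-1} = (\ell-1)!_q \sum_{i=1}^n q^{i(\ell-1)} {i+n-1-\ell \choose n-\ell}_q \ee{i,0}.
\]
Thus the $n$ elements $\{\uu{\ell}.a^{\ell-1}\}_{\ell=1}^n$ are expressed in the basis $\{\ee{i,0}\}_{i=1}^n$ by the matrix with $(i,\ell)$-entry $q^{i(\ell-1)}{i+n-1-\ell \choose n-\ell}_q$. Lemma~\ref{L:qbinom}, applied with $\alpha=i+n-1-\ell$ and $\beta=n-\ell$, shows that this entry vanishes whenever $i>\ell$ and $1\leq\ell\leq n-1$, so the matrix is upper triangular. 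Its diagonal is $q^{\ell(\ell-1)}{n-1 \choose \ell-1}_q$ for $\ell\leq n-1$ and $q^{n(n-1)}$ for $\ell=n$; since $(n-1)!_q\neq 0$, all these entries are nonzero, so the matrix is invertible and the $\uu{\ell}.a^{\ell-1}$ form the desired basis.

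The non-vanishing of $\uu{\ell}.a^r$ and $\ww{\ell}.a^r$ for $0\leq r\leq\ell-1$ is read off from the closed forms: the scalar prefactors $\frac{(\ell-1)!_q}{(\ell-r-1)!_q}$ and $\frac{(n-1)!_q}{(n-r-1)!_q}$ are products of terms $(j)_q$ with $j\in\{1,\ldots,n-1\}$, hence nonzero, while the coefficient of $\ee{1,\ell-1-r}$ (respectively $\ee{1,n-1-r}$) simplifies to the pure power $q^{\ell-1}$ (respectively $q^{-(\ell+1)}$), using ${k \choose k}_q=1$. Vanishing at $r=\ell$ follows similarly: $\uu{\ell}.a^\ell=0$ because $\uu{\ell}.a^{\ell-1}$ lies in the $j=0$ subspace, where (\ref{E:a_action}) gives zero, while $\ww{\ell}.a^\ell=0$ follows from Lemma~\ref{L:a_action} applied to $\ww{\ell}.a^{\ell-1}$, whose coefficients $q^{-i(\ell+1)}$ satisfy $\lambda_i-q^{n-\ell-1}\lambda_{i-1}=q^{-i(\ell+1)}(1-q^n)=0$.

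The main obstacle is the inductive step, where one must carefully track both the factorial coefficient and the exponent of $q$ and recognise that the seeming mismatch between the $q^{-1-r}$ produced by Lemma~\ref{L:a_action} and the $q^{n-1-r}$ required by the $q$-binomial recurrence is reconciled exactly by $q^n=1$ (so that the proof genuinely uses the primitive root-of-unity hypothesis). Once this observation is in hand, the remaining assertions amount to bookkeeping combined with the vanishing pattern of Lemma~\ref{L:qbinom}.
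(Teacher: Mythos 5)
Your proposal is correct and follows essentially the same route as the paper: induction on $r$ through Lemma~\ref{L:a_action}, the reconciliation $q^{-1-r}=q^{n-1-r}$ feeding into the recurrence \eqref{E:recurrence}, and upper-triangularity via Lemma~\ref{L:qbinom} for the basis claim. The only detail you elide, which the paper checks explicitly, is the wrap-around term at $i=1$: Lemma~\ref{L:a_action} there calls for $\lambda_0:=\lambda_n$, i.e.\ the coefficient ${2n-2-r\choose n-1-r}_q$ rather than ${n-2-r\choose n-1-r}_q$, and one must observe (via Lemma~\ref{L:qbinom} and the convention ${k\choose k+1}_q=0$) that both vanish before the recurrence can be applied uniformly in $i$.
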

\begin{proof}
The proof is by induction on $r$, with the case $r=0$ being clear. So assume the result for $0\leq r\leq\ell-2$ with $\ell\geq 2$. Then, by Lemma~\ref{L:a_action},
\begin{align*}
 \uu{\ell}.a^{r+1} &=\frac{(\ell-1)!_q}{(\ell-r-1)!_q}\sum_{i=1}^n q^{i(\ell-1)}{i+n-2-r\choose n-1-r}_q \ee{i, \ell-1-r}.a\\
 &=\frac{(\ell-1)!_q}{(\ell-r-2)!_q}\sum_{i=1}^n q^{i(\ell-1)}\left({i+n-2-r\choose n-1-r}_q -q^{-r-1}{i+n-3-r\choose n-1-r}_q\right)\ee{i, \ell-2-r},
\end{align*}
adopting the convention that ${n-2-r\choose n-1-r}_q={2n-2-r\choose n-1-r}_q$, as required by Lemma~\ref{L:a_action}. But then by Lemma~\ref{L:qbinom} we conclude that ${n-2-r\choose n-1-r}_q={2n-2-r\choose n-1-r}_q=0$, which is consistent with our convention that ${k\choose k+1}_q=0$ for $k\geq 0$. So we can apply \eqref{E:recurrence} to obtain
\begin{align*}
 {i+n-2-r\choose n-1-r}_q-q^{n-r-1} {i+n-r-3\choose n-1-r}_q={i+n-r-3\choose n-r-2}_q \quad \mbox{for all $1\leq i\leq n$,}
\end{align*}
and thus, using $q^n=1$, we establish the $r+1$ case of (a):
\begin{align*}
 \uu{\ell}.a^{r+1} &=\frac{(\ell-1)!_q}{(\ell-r-2)!_q}\sum_{i=1}^n q^{i(\ell-1)} {i+n-r-3\choose n-r-2}_q \ee{i, \ell-2-r}.
\end{align*}

For (b) we have, as above,
\begin{align*}
 \ww{\ell}.a^{r+1} &=\frac{(n-1)!_q}{(n-r-1)!_q}\sum_{i=1}^n q^{-i(\ell+1)}{i+\ell-2-r\choose \ell-1-r}_q \ee{i, n-1-r}.a\\
 &=\frac{(n-1)!_q}{(n-r-2)!_q}\sum_{i=1}^n q^{-i(\ell+1)}\left({i+\ell-2-r\choose \ell-1-r}_q-q^{\ell-1-r}{i+\ell-3-r\choose \ell-1-r}_q\right)\ee{i, n-2-r},
\end{align*}
with the convention that ${\ell-2-r\choose \ell-1-r}_q={n+\ell-2-r\choose \ell-1-r}_q$, as indicated in Lemma~\ref{L:a_action}. Again by Lemma~\ref{L:qbinom} we conclude that ${\ell-2-r\choose \ell-1-r}_q={n+\ell-2-r\choose \ell-1-r}_q=0$, which is consistent with our convention that ${k\choose k+1}_q=0$ for $k\geq 0$. So we can apply \eqref{E:recurrence} to obtain
\begin{align*}
{i+\ell-2-r\choose \ell-1-r}_q-q^{\ell-1-r} {i+\ell-3-r\choose \ell-1-r}_q={i+\ell-3-r\choose \ell-2-r}_q \quad \mbox{for all $1\leq i\leq n$,}
\end{align*}
and conclude the inductive step:
\begin{align*}
 \ww{\ell}.a^{r+1} &=\frac{(n-1)!_q}{(n-r-2)!_q}\sum_{i=1}^n q^{-i(\ell+1)}{i+\ell-3-r\choose \ell-2-r}_q \ee{i, n-2-r}.
\end{align*}
Note that 
\begin{align*}
 \uu{\ell}.a^{\ell-1} &=(\ell-1)!_q\sum_{i=1}^n q^{i(\ell-1)}{i+n-1-\ell \choose n-\ell}_q \ee{i,0}\\
 &=(\ell-1)!_q\sum_{i=1}^\ell q^{i(\ell-1)}{i+n-1-\ell \choose n-\ell}_q \ee{i,0} \quad \mbox{by Lemma~\ref{L:qbinom},}\\
 &=(\ell-1)!_q\left( q^{\ell(\ell-1)}{n-1 \choose n-\ell}_q \ee{\ell,0}+\mathsf{x}_\ell\right),
\end{align*}
with $\mathsf{x}_\ell$ in the $k$-span of $\{ \ee{i, 0} \mid 1\leq i\leq \ell-1\}$. In particular, as $(\ell-1)!_q q^{\ell(\ell-1)}{n-1 \choose n-\ell}_q\neq 0$ for all $1\leq \ell\leq n$ it is evident that the $n$ elements $\uu{\ell}.a^{\ell-1}$ with $1\leq \ell\leq n$ are linearly independent, and thus form a basis of $\bigoplus_{i=1}^n k \ee{i, 0}=\bigoplus_{i=0}^{n-1} k \ee{i, 0}$. Moreover, as $\uu{\ell}.a^{\ell-1}$ is in the $k$-span of $\{ \ee{i, 0} \mid 1\leq i\leq \ell\}$ and $\ee{i, 0}.a=0$ for all $i\in\ZZ$, we also have $\uu{\ell}.a^\ell=0$.

Now using (b) we obtain $\ww{\ell}.a^{\ell-1}=\frac{(n-1)!_q}{(n-\ell)!_q}\sum_{i=1}^n q^{-i(\ell+1)} \ee{i, n-\ell}\neq 0$ and another application of Lemma~\ref{L:a_action} yields $\ww{\ell}.a^{\ell}=0$.
\end{proof}

We are ready to introduce a new basis for $Q$ that leads to its decomposition as an $U_n(q)$-module.

\begin{prop}\label{P:basis_Q}
The elements
\begin{equation}\label{E:basis_t}
\uu{\ell}.a^r, \quad \mbox{with $\ell=1, \ldots, n$ and $r=0, \ldots, \ell-1$},
\end{equation}
and the elements 
\begin{equation}\label{E:basis_w}
\ww{\ell'}.a^{r'}, \quad \mbox{with $\ell'=1, \ldots, n-1$ and $r'=0, \ldots, \ell'-1$},
\end{equation}
together form a basis of $Q$.
\end{prop}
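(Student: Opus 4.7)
The number of proposed elements is $\sum_{\ell=1}^n \ell + \sum_{\ell'=1}^{n-1} \ell' = \tfrac{n(n+1)}{2} + \tfrac{n(n-1)}{2} = n^2 = \dim Q$, so it suffices to prove linear independence. The plan is to exploit the decomposition $Q = \bigoplus_{j=0}^{n-1} Q_j$, where $Q_j = \bigoplus_{i=0}^{n-1} k\ee{i,j}$. By Lemma~\ref{L:a_action_t_w}, $\uu{\ell}.a^r \in Q_{\ell-1-r}$ and $\ww{\ell'}.a^{r'} \in Q_{n-1-r'}$, so any linear dependence among the proposed vectors decomposes into one dependence per degree $j$. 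In degree $j$ the list contributes the $n-j$ elements $\uu{\ell}.a^{\ell-1-j}$ for $\ell = j+1, \ldots, n$ and, when $j \geq 1$, the $j$ elements $\ww{\ell'}.a^{n-1-j}$ for $\ell' = n-j, \ldots, n-1$, a total of $n$ vectors matching $\dim Q_j$.

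Fix $j$ and suppose a dependence
\begin{equation*}
\sum_{\ell=j+1}^n \alpha_\ell\, \uu{\ell}.a^{\ell-1-j} + \sum_{\ell'=n-j}^{n-1} \beta_{\ell'}\, \ww{\ell'}.a^{n-1-j} = 0
\end{equation*}
holds in $Q_j$. The first step is to apply $a^j$ on the right. Since Lemma~\ref{L:a_action_t_w} gives $\ww{\ell'}.a^{\ell'} = 0$ and $\ell' \leq n-1$, each $\ww$-term is killed, because $\ww{\ell'}.a^{n-1-j+j} = \ww{\ell'}.a^{n-1} = 0$. The relation thus reduces to $\sum_\ell \alpha_\ell\, \uu{\ell}.a^{\ell-1} = 0$ in $Q_0$, and since Lemma~\ref{L:a_action_t_w} asserts that $\{\uu{\ell}.a^{\ell-1}\}_{\ell=1}^n$ is a basis of $Q_0$, all coefficients $\alpha_\ell$ vanish.

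It remains to show linear independence of $\{\ww{\ell'}.a^{n-1-j}\}_{\ell' = n-j}^{n-1}$ in $Q_j$ for $j \geq 1$. Set $m = \ell' - (n-1-j)$, so that $m$ runs over $1, \ldots, j$ as $\ell'$ runs over $n-j, \ldots, n-1$. Lemma~\ref{L:a_action_t_w}(b) writes $\ww{\ell'}.a^{n-1-j}$ as a nonzero scalar multiple of $\sum_{i=1}^n q^{-i(m+n-j)}\binom{i+m-2}{m-1}_q \ee{i,j}$. Applying Lemma~\ref{L:qbinom} (with $\alpha = i+m-2$ and $\beta = m-1$), this coefficient vanishes whenever $m \geq 2$ and $i \geq n-m+2$, while at the leading index $i = n-m+1$ it equals a nonzero scalar multiple of $\binom{n-1}{m-1}_q \neq 0$. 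Since the leading indices $n-m+1$ are distinct as $m$ varies over $1, \ldots, j$, the corresponding matrix of coefficients is (after reordering) triangular with nonzero diagonal, and hence the family is linearly independent.

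The principal obstacle is the index bookkeeping in the last paragraph: one must carefully verify that the relevant $q$-binomial coefficients really do vanish in the range prescribed by Lemma~\ref{L:qbinom}, that the purported leading coefficient $\binom{n-1}{m-1}_q$ is indeed nonzero for every $m \in \{1, \ldots, j\}$, and that the leading indices $n-m+1$ distinguish the vectors so as to produce the triangular structure.
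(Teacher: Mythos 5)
Your proof is correct. The opening moves coincide with the paper's: count $n^2$ elements against $\dim_k Q=n^2$, and reduce to independence within a fixed graded piece $Q_j$ (the paper phrases this as the decomposition into eigenspaces for $b$, which is the same decomposition since $\ee{i,j}.b=q^j\ee{i,j}$, and it lists exactly the same $n$ vectors in each piece). Where you genuinely diverge is in how independence of those $n$ vectors is proved. The paper takes a nontrivial dependence with $j$ minimal, applies $a$ \emph{once} — which annihilates only the single term $\ww{n-j}.a^{n-j-1}$ and pushes everything else down to degree $j-1$ — so minimality forces all other coefficients to vanish, and $\ww{n-j}.a^{n-j-1}\neq 0$ kills the last one; it never needs the explicit coefficients of the $\ww$'s beyond the nonvanishing statements of Lemma~\ref{L:a_action_t_w}. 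You instead apply $a^j$ in one blow, which isolates the $\uu$-coefficients via the basis statement for $\bigoplus_i k\ee{i,0}$, and then you must separately prove independence of $\{\ww{\ell'}.a^{n-1-j}\}$ by a triangularity computation. That computation checks out: with $m=\ell'-(n-1-j)$ one has $\ell'+1=m+n-j$ and $i+\ell'-2-(n-1-j)=i+m-2$; the hypotheses of Lemma~\ref{L:qbinom} ($\alpha=i+m-2\geq n$, $1\leq \beta=m-1\leq n-1$, $\alpha-\beta=i-1<n$) hold precisely for $m\geq 2$ and $n-m+2\leq i\leq n$; and the leading coefficient ${n-1\choose m-1}_q$ is nonzero because $(n-1)!_q\neq 0$. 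The trade-off: the paper's minimal-$j$ descent is shorter and handles the $\uu$- and $\ww$-coefficients uniformly without touching $q$-binomial identities, while your argument is more explicit and yields, as a by-product, the leading terms of the vectors $\ww{\ell'}.a^{n-1-j}$. Both are complete proofs.
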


\begin{proof}
Since $\dim_k Q=n^2$ and there are $n^2$ elements listed in \eqref{E:basis_t} and \eqref{E:basis_w}, it suffices to show that the elements in \eqref{E:basis_t} and \eqref{E:basis_w}, when taken together, are linearly independent. 

Notice first that, by \eqref{E:b_action}, 
\begin{equation*}
(\uu{\ell} .a^r).b=q^{\ell-r-1}(\uu{\ell} .a^r) \quad \mbox{and}\quad (\ww{\ell'} .a^{r'}).b=q^{-r'-1}(\ww{\ell'} .a^{r'}),
\end{equation*}
and for $\ell, \ell', r, r'$ in the ranges specified in \eqref{E:basis_t} and \eqref{E:basis_w}, the elements $\uu{\ell} .a^r$ and $\ww{\ell'} .a^{r'}$ are nonzero, as observed in Lemma~\ref{L:a_action_t_w}, and furthermore $0\leq \ell-r-1\leq n-1$ and $-(n-1)\leq -r'-1\leq -1$. Thus these elements are eigenvectors for the action of $b$ and it suffices to show that elements from \eqref{E:basis_t} and \eqref{E:basis_w} associated to the same eigenvalue of $b$ are linearly independent.

So fix $0\leq j\leq n-1$. The eigenvectors for $b$ among \eqref{E:basis_t} and \eqref{E:basis_w} corresponding to the eigenvalue $q^j$ are the $n$ elements of the form $\uu{\alpha}.a^{\alpha-j-1}$ and $\ww{\beta}.a^{n-j-1}$, for $j+1\leq \alpha\leq n$ and $n-j\leq \beta\leq n-1$ (in case $j=0$ there are no elements of the form $\ww{\beta}.a^{k}$). Suppose, by way of contradiction, that there exists $0\leq j\leq n-1$ and scalars $\lambda_1, \ldots, \lambda_n\in k$, not all $0$, such that 
\begin{equation}\label{E:dependence_rel}
\sum_{i=1}^j \lambda_i \ww{n-i}.a^{n-j-1} + \sum_{i=j+1}^n \lambda_i \uu{i}.a^{i-j-1}=0.
\end{equation}
We can choose $j$ minimal with this property. Then $j\geq 1$, because for $j=0$ we get the elements $\uu{\ell}.a^{\ell-1}$ with $1\leq \ell\leq n$, which by Lemma~\ref{L:a_action_t_w} are linearly independent. Applying $a$ to  \eqref{E:dependence_rel} yields
\begin{align*}
 0&=\sum_{i=1}^j \lambda_i \ww{n-i}.a^{n-j} + \sum_{i=j+1}^n \lambda_i \uu{i}.a^{i-j}=\sum_{i=1}^{j-1} \lambda_i \ww{n-i}.a^{n-(j-1)-1} + \sum_{i=j+1}^n \lambda_i \uu{i}.a^{i-(j-1)-1},
\end{align*}
since $\ww{n-j}.a^{n-j}=0$. The minimality assumption on $j$ implies that $\lambda_i=0$ for all $i\neq j$. Hence, from \eqref{E:dependence_rel} we get $\lambda_j \ww{n-j}.a^{n-j-1}=0$. Since $\ww{n-j}.a^{n-j-1}\neq0$, we deduce that also $\lambda_j=0$, contradicting the assumption that not all of the $\lambda_i$ are $0$. This established our claim.
\end{proof}

In our next result, we obtain indecomposable $U_n(q)$-submodules of $Q$ from the new basis elements introduced in Proposition~\ref{P:basis_Q}.

\begin{prop}\label{P:indec_from_basis}\hfill
\begin{enumerate}
\item[\textup{(a)}] For $\ell=1, \ldots, n$, $\displaystyle\bigoplus_{r=0}^{\ell-1} k\, \uu{\ell}.a^r$ is an $\ell$-dimensional right $U_n(q)$-submodule of $Q$ isomorphic to the indecomposable module $P_2 J^{n-\ell}$.
\item[\textup{(b)}] For $\ell'=1, \ldots, n-1$, $\displaystyle\bigoplus_{r'=0}^{\ell'-1} k\, \ww{\ell'}.a^{r'}$ is an $\ell'$-dimensional right $U_n(q)$-submodule of $Q$ isomorphic to the indecomposable module $P_{2+\ell'} J^{n-\ell'}$, where by convention $P_{n+1}=P_1$.
\end{enumerate}
\end{prop}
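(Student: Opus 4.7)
My plan is to identify each candidate subspace as a cyclic right $U_n(q)$-submodule of $Q$, and then match it with the advertised $P_iJ^{r-1}$ through a two-relation presentation of the generator: its $b$-eigenvalue together with the least power of $a$ annihilating it.

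For part (a), I first check that $M_\ell := \bigoplus_{r=0}^{\ell-1}k\,\uu{\ell}.a^r$ is a right $U_n(q)$-submodule. Closure under $b$ is immediate from \eqref{E:b_action} since each $\uu{\ell}.a^r$ is a $b$-eigenvector, and closure under $a$ together with $\dim_k M_\ell = \ell$ both follow from Lemma~\ref{L:a_action_t_w}, where $\uu{\ell}.a^\ell = 0$ while $\uu{\ell}.a^{\ell-1} \neq 0$. Thus $M_\ell$ is cyclic, generated by $\uu{\ell}$, and this generator satisfies $\uu{\ell}(b-q^{\ell-1}) = 0$ together with $\uu{\ell}\cdot a^\ell = 0$. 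In parallel, $P_2J^{n-\ell}$ is the $\ell$-dimensional cyclic submodule of $P_2$ generated by $e_2 a^{n-\ell}$, whose $b$-eigenvalue equals $q^{1-2-(n-\ell)} = q^{\ell-1}$ (using $q^n = 1$) and which is annihilated by $a^\ell$. To match the two, I introduce the auxiliary cyclic module
$$
N_{\lambda,k} := U_n(q)\big/\big((b-\lambda)U_n(q)+a^kU_n(q)\big)
$$
and show $\dim_k N_{\lambda,k} = k$: modding out by $a^kU_n(q)$ leaves the $nk$-dimensional span of $\{b^ia^j : 0 \le i \le n-1,\, 0 \le j < k\}$, which is free of rank $k$ as a left $k[b]$-module, and imposing $b = \lambda$ then collapses each $k[b]$-summand to one dimension because $b-\lambda$ has rank $n-1$ in the semisimple algebra $k[b]/(b^n-1)$. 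The universal property of $N_{q^{\ell-1},\ell}$ now produces surjections onto both $M_\ell$ and $P_2J^{n-\ell}$, forcing these two $\ell$-dimensional cyclic modules to be isomorphic; indecomposability of $M_\ell$ is then inherited from $P_2J^{n-\ell}$.

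Part (b) proceeds by the identical template with $\ww{\ell'}$ replacing $\uu{\ell}$. The $r'=0$ case of $(\ww{\ell'}.a^{r'}).b = q^{-r'-1}\ww{\ell'}.a^{r'}$ gives the $b$-eigenvalue $q^{-1}$, and Lemma~\ref{L:a_action_t_w} supplies $\ww{\ell'}\cdot a^{\ell'} = 0 \neq \ww{\ell'}\cdot a^{\ell'-1}$, so $\bigoplus_{r'=0}^{\ell'-1}k\,\ww{\ell'}.a^{r'}$ is cyclic of dimension $\ell'$. The matching $P_{2+\ell'}J^{n-\ell'}$ has top generator $e_{2+\ell'}a^{n-\ell'}$ of $b$-eigenvalue $q^{1-(2+\ell')-(n-\ell')} = q^{-1}$ (again using $q^n=1$) and killed by $a^{\ell'}$, so both modules are isomorphic to $N_{q^{-1},\ell'}$. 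The convention $P_{n+1} = P_1$ intervenes only at $\ell' = n-1$, where a direct check confirms that $P_1J$ has the correct generator data.

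The main technical hurdle is the dimension count for $N_{\lambda,k}$. Since $(b-\lambda)U_n(q)$ is only a right ideal, one must verify that left multiplication by $b-\lambda$ is compatible with the right-ideal quotient by $a^kU_n(q)$; this follows from $ba^k = q^ka^kb$, which yields $(b-\lambda)a^k = a^k(q^kb-\lambda) \in a^kU_n(q)$, so the further quotient is controlled by the action of $b-\lambda$ on the free left $k[b]$-module $U_n(q)/a^kU_n(q)$ of rank $k$. Everything else is routine bookkeeping combining \eqref{E:b_action}, Lemma~\ref{L:a_action_t_w}, and $q^n=1$.
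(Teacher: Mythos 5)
Your argument is correct, and it reaches the same endpoint as the paper by matching the same two invariants of a cyclic generator: the $b$-eigenvalue and the exact order of nilpotency under $a$. The difference is in how the isomorphism is produced. The paper simply exhibits the basis-to-basis assignment $\uu{\ell}.a^r\mapsto e_2.a^{n-\ell+r}$ (resp.\ $\ww{\ell'}.a^{r'}\mapsto e_{2+\ell'}.a^{n-\ell'+r'}$) and observes that the eigenvalues and the $a$-action pattern agree on both sides, so the map is a module isomorphism by inspection. You instead interpose the presented module $N_{\lambda,k}=U_n(q)/\bigl((b-\lambda)U_n(q)+a^kU_n(q)\bigr)$, verify $\dim_k N_{\lambda,k}=k$ for $\lambda$ an $n$-th root of unity, and obtain both $\uu{\ell}U_n(q)$ and $e_2a^{n-\ell}U_n(q)$ as $\ell$-dimensional quotients of the $\ell$-dimensional $N_{q^{\ell-1},\ell}$, hence isomorphic to it and to each other. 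Your eigenvalue computations ($e_ia^rb=q^{1-i-r}e_ia^r$, giving $q^{\ell-1}$ and $q^{-1}$ respectively) check out against \eqref{E:b_action} and Lemma~\ref{L:a_action_t_w}, and the dimension count for $N_{\lambda,k}$ is sound since $U_n(q)/a^kU_n(q)$ is free of rank $k$ over $k[b]/(b^n-1)$ and $b-\lambda$ has corank one there. What your route buys is that you never have to check by hand that an explicitly written map intertwines the actions of both generators: the universal property does it for you. What it costs is the auxiliary dimension computation; the paper's direct map is shorter because Proposition~\ref{P:basis_Q} and Lemma~\ref{L:a_action_t_w} already supply everything needed to see the assignment is well defined and bijective. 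Both proofs rely on the stated fact that the $P_iJ^r$ exhaust the indecomposables, so neither reproves indecomposability.
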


\begin{proof}
For the proof of (a), write  $\displaystyle V_\ell=\bigoplus_{r=0}^{\ell-1} k\, \uu{\ell}.a^r$. By Proposition~\ref{P:basis_Q}, it is clear that this sum is indeed direct and thus $\dim_k V_\ell=\ell$. Moreover, $V_\ell$ is a right $U_n(q)$-submodule of $Q$ because $\uu{\ell}.a^r$ is an eigenvector for $b$ and $\left(\uu{\ell}.a^{\ell-1}\right).a=\uu{\ell}.a^{\ell}=0$. 

Under the action of $b$ on $V_\ell$, the eigenvectors $\uu{\ell}.a^r$ have corresponding eigenvalue $q^{\ell-r-1}$, for $r=0, \ldots,\ell-1$. Thus the assignment $\uu{\ell}.a^r\mapsto e_2.a^{n-\ell+r}$, for $r=0, \ldots,\ell-1$, gives the required isomorphism $V_\ell\longrightarrow P_2 J^{n-\ell}$.

Part (b) is entirely analogous and we just describe the isomorphism $W_{\ell'}\longrightarrow P_{2+\ell'} J^{n-\ell'}$, where $\displaystyle W_{\ell'}=\bigoplus_{r'=0}^{\ell'-1} k\, \ww{\ell'}.a^{r'}$, which is $\ww{\ell'}.a^{r'}\mapsto e_{2+\ell'}.a^{n-\ell'+r'}$, for $r'=0, \ldots,\ell'-1$.
\end{proof}

The Krull-Schmidt decomposition of $Q$ now falls out easily.

\begin{theorem}\label{T:KSdecQ}
We have the following decomposition of $Q$ into indecomposables, as a right $U_n(q)$-module:
\begin{align*}
 Q&\cong\bigoplus_{\ell=1}^{n} P_2 J^{n-\ell} \oplus \bigoplus_{\ell'=1}^{n-1} P_{2+\ell'} J^{n-\ell'}\\
 &\cong P_2 \oplus P_2 J \oplus  \cdots \oplus P_2 J^{n-1}\oplus P_3 J^{n-1}\oplus P_4 J^{n-2}\oplus\cdots\oplus P_n J^2\oplus P_1 J.
\end{align*}
\end{theorem}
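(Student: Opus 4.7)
The plan is to assemble Theorem~\ref{T:KSdecQ} directly from Propositions~\ref{P:basis_Q} and~\ref{P:indec_from_basis}, which between them carry essentially all of the work. Proposition~\ref{P:indec_from_basis} already exhibits the submodules $V_\ell = \bigoplus_{r=0}^{\ell-1} k\,\uu{\ell}.a^r$ and $W_{\ell'}=\bigoplus_{r'=0}^{\ell'-1} k\,\ww{\ell'}.a^{r'}$ of $Q$, identifies each with the requested indecomposable $P_2 J^{n-\ell}$ or $P_{2+\ell'} J^{n-\ell'}$, and Proposition~\ref{P:basis_Q} shows that the union of the listed bases of these submodules is a basis of $Q$.

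First I would observe that Proposition~\ref{P:basis_Q} implies the sum $\sum_{\ell=1}^n V_\ell + \sum_{\ell'=1}^{n-1} W_{\ell'}$ is internal direct: any linear dependence among elements from distinct summands would produce a linear dependence among the basis vectors displayed in \eqref{E:basis_t} and \eqref{E:basis_w}, which is ruled out by that proposition. A dimension check confirms nothing is missed, since
\begin{equation*}
\sum_{\ell=1}^{n} \dim V_\ell + \sum_{\ell'=1}^{n-1} \dim W_{\ell'} = \sum_{\ell=1}^{n}\ell + \sum_{\ell'=1}^{n-1}\ell' = \frac{n(n+1)}{2} + \frac{(n-1)n}{2} = n^2 = \dim_k Q.
\end{equation*}
Hence $Q = \bigoplus_{\ell=1}^{n} V_\ell \oplus \bigoplus_{\ell'=1}^{n-1} W_{\ell'}$ as right $U_n(q)$-modules, and applying the isomorphisms of Proposition~\ref{P:indec_from_basis}(a)--(b) termwise produces the first displayed decomposition in the theorem. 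The second displayed form is then simply a rewriting: as $\ell$ ranges over $1, \ldots, n$, the summands $P_2 J^{n-\ell}$ run through $P_2 J^{n-1}, \ldots, P_2 J, P_2$, and as $\ell'$ ranges over $1, \ldots, n-1$, the summands $P_{2+\ell'} J^{n-\ell'}$ run through $P_3 J^{n-1}, P_4 J^{n-2}, \ldots, P_n J^2, P_{n+1} J = P_1 J$, where we invoke the convention $P_{n+1}=P_1$ stated in Proposition~\ref{P:indec_from_basis}(b).

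There is really no main obstacle at this stage; the technical difficulty has been absorbed into the two preceding propositions (which in turn rest on the $q$-binomial identities of Lemmas~\ref{L:qbinom} and~\ref{L:a_action_t_w}). The only small point to be careful about is the indexing convention $P_{n+1}=P_1$, which ensures the second enumeration parses correctly for the summand corresponding to $\ell'=n-1$.
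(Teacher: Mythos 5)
Your proposal is correct and follows exactly the paper's route: the paper's proof consists of the single sentence that the decomposition follows immediately from Propositions~\ref{P:basis_Q} and~\ref{P:indec_from_basis}, and you have merely made explicit the routine details (directness of the sum, the dimension count, and the re-indexing with the convention $P_{n+1}=P_1$) that the paper leaves implicit.
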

\begin{proof}
 The decomposition follows immediately from Proposition~\ref{P:basis_Q} and Proposition~\ref{P:indec_from_basis}.
\end{proof}

For the remainder of this section and the next, we will adopt the notation $M(\ell, i)$ used in~\cite{COZ} for the indecomposable $U_n(q)$-modules. This will make it more convenient for the reader to check any additional details in~\cite{COZ} concerning the Green ring $r(U_n(q))$ of the Taft algebra $U_n(q)$ and the tensor product of indecomposable $U_n(q)$-modules. The $P_iJ^r$ notation may be recovered  through the isomorphism 
\begin{equation*}
M(\ell, i)\cong P_{\ell-i+1}J^{n-\ell}, \quad \mbox{with indices mod $n$.}
\end{equation*}
 Note that  the authors in \cite{COZ} consider left modules in contrast to our right modules; one goes from one to the other by changing from $R$ to $R^{\textup{op}}$, and  from $q$ to $q^{-1}$, so that the results are unchanged. 
\begin{cor}
In the notation of \cite{COZ}, the isoclass of $Q$ satisfies the equation
\begin{equation}
[Q] = 1 + a + (a^{n-1} + a)x + (a^{n-2}+a)u_3 + \cdots + (a^2 +a)u_{n-1} + au_n
\end{equation}
in the Green ring $r(U_n(q))$.
\end{cor}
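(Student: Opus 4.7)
The plan is to obtain the formula by translating the Krull--Schmidt decomposition of Theorem~\ref{T:KSdecQ} into the $M(\ell,i)$ notation of~\cite{COZ}, and then re-expressing each $[M(\ell,i)]$ as a monomial in the generators $1, a, x, u_3, \ldots, u_n$ of $r(U_n(q))$.

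First I would apply the correspondence $M(\ell,i) \cong P_{\ell-i+1} J^{n-\ell}$ (indices mod $n$) summand by summand. For the first family $P_2 J^{n-\ell}$ with $\ell = 1,\ldots,n$, matching $\ell-i+1=2$ forces $i=\ell-1$, so $P_2 J^{n-\ell} \cong M(\ell, \ell-1)$. For the second family $P_{2+\ell'} J^{n-\ell'}$ with $\ell' = 1,\ldots, n-1$, matching $n-\ell = n-\ell'$ and $\ell-i+1 = 2+\ell'$ forces $\ell=\ell'$ and $i \equiv -1 \equiv n-1 \pmod n$, so $P_{2+\ell'} J^{n-\ell'} \cong M(\ell', n-1)$. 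Hence in the Green ring
\begin{equation*}
[Q] \;=\; \sum_{\ell=1}^{n}[M(\ell,\ell-1)] \;+\; \sum_{\ell'=1}^{n-1}[M(\ell', n-1)].
\end{equation*}

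Next I would invoke the generator conventions of~\cite{COZ}. The one-dimensional simples form a cyclic group $\{1, a, \ldots, a^{n-1}\}$ with $1 = [M(1,0)]$ trivial and $a = [M(1,n-1)]$ a grouplike simple; tensoring by $a$ shifts the second parameter by $-1$, i.e.~$a^k\cdot[M(\ell,i)] = [M(\ell,i-k)]$. Setting $x = [M(2,0)]$ and $u_\ell = [M(\ell,0)]$ for $3 \leq \ell \leq n$ (with the convention $u_2 := x$), one reads off $M(\ell, \ell-1) = a^{n-\ell+1}\cdot u_\ell$ and $M(\ell', n-1) = a\cdot u_{\ell'}$ for $\ell, \ell' \geq 2$, while the $\ell=1=\ell'$ terms contribute $1$ and $a$. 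Substituting and collecting summands with the same $u_\ell$-factor yields
\begin{equation*}
[Q] \;=\; 1 + a + (a^{n-1}+a)\,x + \sum_{\ell=3}^{n-1}(a^{n-\ell+1}+a)\,u_\ell + a\,u_n,
\end{equation*}
which is the claimed identity; the trailing term $a\,u_n$ lacks a ``partner'' because the second family stops at $\ell' = n-1$.

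The argument is pure bookkeeping. The only potential pitfall, and hence the main place to be careful, is to confirm that the labelling convention for $M(\ell,i)$ and the $a$-shift rule used in~\cite{COZ} match those stated above, so that the indices translate correctly; once this is verified, the identity is an immediate rewriting of Theorem~\ref{T:KSdecQ}.
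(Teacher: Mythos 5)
Your proposal is correct and follows essentially the same route as the paper: translate the Krull--Schmidt decomposition of Theorem~\ref{T:KSdecQ} into the $M(\ell,i)$ notation (the paper's Eq.~(\ref{eq: coznotation})) and then apply the relation $[M(\ell,r)] = a^{n-r}u_\ell$ with $u_1=1$, $u_2=x$ to collect terms. The only difference is that you spell out the index bookkeeping that the paper leaves implicit.
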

\begin{proof}
Recall from \cite{COZ} that $r(U_n(q))$ is generated by the simple   $a = [M(1,n-1)]  = [P_3J^{n-1}]$, and
the $2$-dimensional indecomposable,  $x = [M(2,0)] = [P_3J^{n-2}]$.  The Fibonacci-like polynomials $u_{\ell}(a,x)$  satisfy 
$u_{\ell + 1} = xu_{\ell} - au_{\ell - 1}$, where $u_1 = 1$ and $u_2 = x$, and satisfy $u_{\ell} = [M(\ell,0)]$. Thus, $[M(\ell,r)] = a^{n-r}u_{\ell}$. The rest follows from the Theorem, reinterpreted as Eq.~(\ref{eq: coznotation}) below.
\end{proof}
\section{Decomposition of $Q\otimes Q$}

The aim of this section is to show that all indecomposable $U_n(q)$-modules occur in the Krull-Schmidt decomposition of $Q\otimes Q$. We use the notation in \cite{COZ} as mentioned above. 
For the reader's convenience we record below the results from \cite[Section 3]{COZ} which are most relevant to us. Recall that $M(\ell, r)=M(\ell, r+n)$ for all $1\leq \ell\leq n$ and $r\in\mathbb{Z}$.

\begin{theorem}[{\cite[Section 3]{COZ}}]\label{T:COZ14}
\label{thm-COZ} Let $1\leq \ell, \ell'\leq n$, $r, r'\in\mathbb{Z}$, $\ell_0 = \min\{\ell, \ell'\}$ and $\ell_1 = \max\{\ell, \ell'\}$. The following hold:
\begin{itemize}
\item[\textup{(a)}] $\displaystyle M(\ell, r)\otimes M(\ell', r')\cong M(\ell', r')\otimes M(\ell, r)$.
\item[\textup{(b)}] $\displaystyle M(\ell, r)\otimes M(1, r')\cong M(\ell, r+r')$.
\item[\textup{(c)}] If $\ell \geq 2$, then $\displaystyle M(\ell, r)\otimes M(n, r')\cong \bigoplus_{i=1}^\ell M(n,r+r'+i-\ell)$.
\item[\textup{(d)}] If $\ell +\ell' \leq n$, then $\displaystyle M(\ell, r)\otimes M(\ell', r')\cong \bigoplus_{i=1}^{\ell_0} M(\ell_1-\ell_0-1+2i,r+r'+i-\ell_0)$.
\item[\textup{(e)}] If $\ell, \ell'< n$ and $\ell +\ell' \geq n$, then 
\begin{align*}
 M(\ell, r)\otimes M(\ell', r')\cong  \left(\bigoplus_{i=1}^{n-\ell_1} M(\ell_1-\ell_0-1+2i,r+r'+i-\ell_0)\right) \ &\\
   \qquad \oplus \left(\bigoplus_{j=1}^{\ell+\ell'-n} M(n,r+r'+1-j)\right).&
\end{align*}
\end{itemize}
\end{theorem}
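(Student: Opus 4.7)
The strategy is to fix explicit bases for each $M(\ell,r)$ consisting of $b$-eigenvectors strung together by the action of $a$, then analyze tensor products via the coproduct $\cop(b)=b\o b$, $\cop(a)=a\o b+1\o a$. Specifically, I would take a basis $m_0,\ldots,m_{\ell-1}$ for $M(\ell,r)$ with $m_i.b=q^{r-i}m_i$, $m_i.a=m_{i+1}$ for $i<\ell-1$, and $m_{\ell-1}.a=0$. Parts~(a) and~(b) are then essentially immediate: for~(b) the one-dimensional tensor factor just shifts $b$-weights and rescales the transition maps, while~(a) follows by verifying that the decompositions obtained below are manifestly symmetric under swapping the two tensor factors.

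Part~(c) uses that $M(n,r')$ is projective, since its $k$-dimension equals the Loewy length of $U_n(q)$. Tensoring any module with a projective yields a projective, so $M(\ell,r)\o M(n,r')$ is projective of dimension $\ell n$, hence decomposes as a sum of $\ell$ indecomposable projectives $M(n,s_i)$; the shifts $s_i$ are read off by matching $b$-characters on socles (or tops).

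Parts~(d) and~(e), the main content, I would prove by induction on $\ell_0=\min\{\ell,\ell'\}$, using the short exact sequence $0\to M(\ell_0-1,r+1)\to M(\ell_0,r)\to M(1,r)\to 0$ tensored with $M(\ell_1,r')$ and applying the inductive hypothesis (together with~(b)) to the outer terms. To identify the constituents constructively, I would then build explicit ``highest weight'' vectors in $M(\ell,r)\o M(\ell',r')$: pure $b$-weight vectors $v_i$ annihilated by an appropriate power of $a$. The cyclic submodules they generate must be shown to be indecomposable of the stated dimension $\ell_1-\ell_0-1+2i$, and a dimension count then checks that their direct sum exhausts the tensor product.

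The main obstacle is the boundary regime in~(e) where $\ell+\ell'\geq n$. Here the $a$-string emanating from a candidate highest weight vector can exceed length $n-1$, forcing the associated summand to collapse into a projective $M(n,\cdot)$; keeping track of how many such projective summands accumulate (precisely $\ell+\ell'-n$ of them) while ensuring the remaining $n-\ell_1$ non-projective summands still arise from linearly independent highest weight vectors requires delicate $q$-binomial bookkeeping of the sort encoded in Lemma~\ref{L:qbinom}. Managing this bookkeeping uniformly for all pairs $(\ell,\ell')$ in the boundary regime, and not just case-by-case, is where the real work of the proof lies.
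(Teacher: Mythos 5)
First, a structural point: the paper does not prove this theorem. It is imported wholesale from \cite[Section 3]{COZ} (which in turn builds on \cite{G}), so there is no in-paper argument to measure your proposal against; the relevant comparison is with the cited literature. Your outline — string bases of $b$-eigenvectors linked by $a$, projectivity of $M(n,r')$ for part (c), induction on the shorter tensor factor, and explicit primitive vectors governed by $q$-binomial identities for (d) and (e) — is indeed the strategy of those papers, so the overall plan is sound.

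As a proof, however, the proposal has genuine gaps. (i) Part (a) is not ``essentially immediate'': $U_n(q)$ is neither cocommutative nor quasitriangular (the paper emphasizes this), so $M\otimes N$ and $N\otimes M$ are a priori non-isomorphic. Deducing (a) from the symmetry of the formulas in (b)--(e) is legitimate only if those formulas are established for \emph{both} orderings of the factors; but your induction on $\ell_0=\min\{\ell,\ell'\}$ tacitly places the shorter module on a preferred side, which is exactly what (a) would license. You must either run the construction symmetrically or prove (a) independently. (ii) In part (c), ``matching $b$-characters'' cannot identify the projective summands: every $P_i$ contains each character of $\langle b\rangle$ exactly once, so the restriction to $k\langle b\rangle$ sees only the total number of projectives. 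You need the character of the top $(M\otimes N)/(M\otimes N)J$ or of the socle $\ker a$, which is an additional computation, not a read-off. (iii) Most seriously, tensoring the short exact sequence $0\to M(\ell_0-1,r+1)\to M(\ell_0,r)\to M(1,r)\to 0$ with $M(\ell_1,r')$ determines the middle term only up to extension; over a non-semisimple algebra this fixes composition factors, not Krull--Schmidt constituents. Hence the entire content of (d) and (e) — in particular how many strings collapse into projectives when $\ell+\ell'\geq n$ — rests on the explicit highest-weight vectors, their exact annihilation order under $a$, their linear independence, and the dimension count, all of which you correctly locate as ``where the real work lies'' but do not carry out. Until that construction is supplied, the proposal is a plausible roadmap rather than a proof.
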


Given the decomposition 
\begin{equation}
\label{eq: coznotation}
Q\cong\bigoplus_{\ell=1}^{n} M(\ell, \ell-1) \oplus \bigoplus_{\ell'=1}^{n-1} M(\ell', n-1)
\end{equation}
from Theorem~\ref{T:KSdecQ} and the distributivity of $\oplus$ relative to $\otimes$, it is enough to show that for any integers $1\leq x\leq n$ and $0\leq y\leq n-1$, the module $M(x, y)$ is isomorphic to a direct summand of $M(i, j)\otimes M(i', j')$ for some $(i, j), (i', j')\in \mathcal{J}$, where 
\begin{equation*}
\mathcal{J}=\{ (\ell, \ell-1) \mid 1\leq \ell\leq n\}\cup \{ (\ell, n-1) \mid 1\leq \ell\leq n-1\}.
\end{equation*}

We will consider four cases.

\medskip 

\textbf{Case 1:} Assume that $y=n-1$ or $x=y+1$. Then $(x, y)\in\mathcal{J}$, $(1, 0)\in\mathcal{J}$ and $M(x, y)\otimes M(1, 0)\cong M(x, y)$, by Theorem \ref{T:COZ14}(b).

\medskip 

\textbf{Case 2:} Assume that $x=n$. Then as $(n, n-1)\in\mathcal{J}$ and $M(n, n-1)\otimes M(n, n-1)\cong\bigoplus_{i=1}^n M(n, i-2)$, by Theorem \ref{T:COZ14}(c), we deduce that $M(n, y)$ is a direct summand of $M(n, n-1)\otimes M(n, n-1)$ for any $y$, since as $i$ varies from $1$ to $n$, $i-2$ runs through all residue classes modulo $n$.

\medskip 

\textbf{Case 3:} Assume that $x\leq y\leq n-2$, with $n\geq 3$. Define $k=n+x-y-2$ and $\ell=n-y-1$. Then $1\leq x\leq k\leq n-2$ and $1\leq \ell\leq k$, so that $(k, n-1), (\ell, n-1)\in \mathcal{J}$. We will show, using Theorem \ref{T:COZ14}, that $M(x, y)$ is a direct summand of $M(k, n-1)\otimes M(\ell, n-1)$. 

Indeed, we have $\ell_0 = \min\{k, \ell\}=\ell$ and $\ell_1 = \max\{k, \ell\}=k$; in particular, $\ell_0\geq 1$ and $n-\ell_1\geq 1$. If $k+\ell\leq n$ (respectively, $k+\ell> n$),  Theorem \ref{T:COZ14}(d) (respectively, Theorem \ref{T:COZ14}(e)) with $i=1$ shows that $M(k-\ell+1, 2n-1-\ell)$ is a direct summand of $M(k, n-1)\otimes M(\ell, n-1)$. The claim follows since 
$k-\ell+1=x$ and $2n-1-\ell=y+n$, which is congruent to $y$ modulo $n$.

\medskip 

\textbf{Case 4:} Assume that $y+2\leq x\leq n-1$, with $n\geq 3$. Define $k=n-y-2$ and $\ell=n+y-x+1$. Then $1\leq n-x\leq k\leq n-2$ and $2\leq n-k\leq \ell\leq n-1$, so that $(k, n-1), (\ell, \ell-1)\in \mathcal{J}$. We will show that $M(x, y)$ is a direct summand of $M(k, n-1)\otimes M(\ell, \ell-1)$. 

Since $k+\ell\geq n$, Theorem \ref{T:COZ14}(e) applies and by taking $i=n-\ell_1\geq 1$ in the formula, we conclude that 
$M(-(\ell_0+\ell_1)+2n-1,2n-2+\ell-(\ell_0+\ell_1))$ is a direct summand of $M(k, n-1)\otimes M(\ell, \ell-1)$, where $\ell_0 = \min\{k, \ell\}$ and $\ell_1 = \max\{k, \ell\}$. Using $\ell_0+\ell_1=k+\ell$ we see that $-(\ell_0+\ell_1)+2n-1=-(2n-1-x)+2n-1=x$ and $2n-2+\ell-(\ell_0+\ell_1)=2n-2-k=y+n$, which is congruent to $y$ modulo $n$.

We have thus proved the following.

\begin{theorem}\label{T:decQQ}
All indecomposable $U_n(q)$-modules occur in the Krull-Schmidt decomposition of $Q\otimes Q$ as a right $U_n(q)$-module.
\end{theorem}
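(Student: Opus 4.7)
The plan is to exploit the Krull--Schmidt decomposition of $Q$ given in Theorem~\ref{T:KSdecQ} together with the distributivity of $\otimes$ over $\oplus$ to reduce the question to a purely combinatorial claim: writing
\begin{equation*}
\mathcal{J}=\{(\ell,\ell-1)\mid 1\leq\ell\leq n\}\cup\{(\ell,n-1)\mid 1\leq\ell\leq n-1\}
\end{equation*}
for the index set of summands of $Q$, it suffices to produce, for every admissible pair $(x,y)$ with $1\leq x\leq n$ and $0\leq y\leq n-1$, two pairs in $\mathcal{J}$ whose tensor product contains $M(x,y)$ as a summand. The tensor product formulas recalled in Theorem~\ref{T:COZ14} from \cite{COZ} then turn the problem into explicit arithmetic modulo $n$ on the two indices.

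Next, I would partition the pairs $(x,y)$ according to which formula of Theorem~\ref{T:COZ14} can most easily reach them. The pairs with $y=n-1$ or $x=y+1$ already lie in $\mathcal{J}$, so tensoring with the trivial module $M(1,0)$ suffices by part (b). The pairs with $x=n$ are handled by part (c): since $M(n,n-1)\otimes M(n,n-1)\cong\bigoplus_{i=1}^{n}M(n,i-2)$, the second index $i-2$ sweeps through every residue class mod $n$, so every $M(n,y)$ is hit by a single tensor product.

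The substantive cases are $x\leq y\leq n-2$ and $y+2\leq x\leq n-1$, both requiring $n\geq 3$. For these I would search for suitable $k,\ell$ with either $(k,n-1),(\ell,n-1)\in\mathcal{J}$ or $(k,n-1),(\ell,\ell-1)\in\mathcal{J}$ such that the formulas in Theorem~\ref{T:COZ14}(d) or (e) deliver $M(x,y)$, choosing the extremal summand indices $i=1$ or $i=n-\ell_1$ so that the first coordinate collapses to $|k-\ell|+1$ or to $-(k+\ell)+2n-1$. Matching these expressions to $x$ produces a simple linear system for $k$ and $\ell$ in terms of $x$ and $y$; for instance, $k=n+x-y-2$ and $\ell=n-y-1$ work in the first subcase, and $k=n-y-2$, $\ell=n+y-x+1$ in the second.

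The main obstacle I anticipate is the bookkeeping: one must verify in each case that the chosen $k$ and $\ell$ land in the allowed ranges imposed by $\mathcal{J}$, that the dichotomy $k+\ell\leq n$ versus $k+\ell\geq n$ correctly selects part (d) or (e) of Theorem~\ref{T:COZ14}, and that the second index emerging from the formula agrees with $y$ modulo $n$ (in practice it comes out as $y+n$, which is fine since $M(x,r)=M(x,r+n)$). Once these verifications are complete in the four cases described, the union of cases covers every $(x,y)$ with $1\leq x\leq n$ and $0\leq y\leq n-1$, and the theorem follows.
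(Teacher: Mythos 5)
Your proposal follows the paper's proof essentially verbatim: the same reduction via the index set $\mathcal{J}$ and Theorem~\ref{T:COZ14}, the same four-case split, and the same choices $k=n+x-y-2$, $\ell=n-y-1$ and $k=n-y-2$, $\ell=n+y-x+1$ in the two substantive cases. The argument is correct and matches the paper's approach.
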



\begin{thebibliography}{99}
\bibitem{AF} Anderson and K.~Fuller,  \textit{Rings and Categories of Modules}, G.T.M.\ \textbf{13}, Springer, 1992.  
\bibitem{BDK} R.~Boltje, S.~Danz and B.~K\"ulshammer, On the depth of subgroups and group algebra extensions, \textit{J.\ Algebra} \textbf{335} (2011), 258--281.  
\bibitem{BK} R.~Boltje and B.~K\"ulshammer,
On the depth 2 condition for group algebra and Hopf algebra extensions, 
\textit{J.\ Algebra} \textbf{323} (2010), 1783--1796.
\bibitem{BK2}{R.~Boltje and B.~K\"ulshammer,
Group algebra extensions of depth one, \textit{Algebra Number Theory} \textbf{5}
(2011), 63--73.} 
\bibitem{BuK} S.~Burciu and L.~Kadison, Subgroups of depth three,  \textit{Surv.\ Diff.\ Geom.} \textbf{XV}, (2011), 17--36. 
\bibitem{BKK} S.~Burciu, L.~Kadison and B.~K\"ulshammer, On subgroup depth, \textit{I.E.J.A.} \textbf{9} (2011), 133--166. 
\bibitem{C}H.-X.~Chen, Irreducible representations of a class of quantum doubles, \textit{J.\ Algebra} \textbf{225} (2000), 391--409. 
\bibitem{HXC}H.-X.~Chen, The Green ring of Drinfeld double $D(H_4)$, arXiv preprint 1209.3471.
\bibitem{COZ}H.-X.~Chen, F.~Van~Oystaeyen and J.~Zhang, Green rings of Taft algebras, \textit{Proc.\ A.M.S.} \textbf{142} (2014), 765--775. 
\bibitem{CC}C.~Cibils, A quiver quantum group,  \textit{Comm.\ Math.\ Phys.} \textbf{157} (1993), no. 3, 459--477. 
\bibitem{D}S.~Danz, The depth of some twisted group extensions, \textit{Comm.\ Alg.} \textbf{39} (2011), 1--15. 
\bibitem{F}W.~Feit, \textit{Representation Theory of Finite Groups}, North Holland, 1982.
\bibitem{F1}T.~Fritzsche, The depth of subgroups of $\mathrm{PSL}(2,q)$, \textit{J.\ Algebra} \textbf{349} (2011), 217--233. 
\bibitem{F2} T.~Fritzsche, The depth of subgroups of $\mathrm{PSL}(2,q)$ II, \textit{J.\ Algebra} \textbf{381} (2013), 37--53.
\bibitem{FKR}T.~Fritzsche, B.~K\"ulshammer and C.~Reiche, The depth of Young subgroups of symmetric groups,  \textit{J.\ Algebra} \textbf{381} (2013),  96--109. 
\bibitem{G}E.~Gunnlaugsd\'ottir, Monoidal structure of the category of $u_q^+$-modules,  \textit{Linear Algebra Appl.} \textbf{365} (2003), 183--199. Special issue on linear algebra methods in representation theory.
 \bibitem{HKY} A.~Hernandez, L.~Kadison and C.J.~Young, Algebraic quotient modules and subgroup depth, \textit{Abh.\ Math.\ Semin.\ Univ.\ Hamburg} \textbf{84} (2014), 267--283.
\bibitem{HKS} A.~Hernandez, L.~Kadison and M.~Szamotulski, Subgroup depth and twisted coefficients, \textit{Comm.\ Alg.} \textbf{44} (2016), 3570--3591.
\bibitem{H}A.~Hernandez, The quotient module, coring depth and factorization algebras, Doctoral thesis, U.\ Porto, September 2016. 
\bibitem{HHP} L.~H\'ethelyi, E.~Horv\'ath and F.~Pet\'enyi, The depth of subgroups of Suzuki groups, \textit{Comm.\ Alg.} \textbf{43} (2015), 4553--4569.
\bibitem{HHP2} L.~H\'ethelyi, E.~Horv\'ath and F.~Pet\'enyi, The depth of the maximal subgroups of Ree groups, \texttt{arXiv:1608.06774}. 
\bibitem{NEFE}L.~Kadison, \textit{New Examples of Frobenius Extensions}, Univ.\ Lect.\ Ser.\ \textbf{14}, A.M.S., 1998. 
\bibitem{K2012}L.~Kadison, Odd H-depth and H-separable extensions, \textit{Cen.\ Eur.\ J.\ Math.}
\textbf{10} (2012), 958--968.
\bibitem{K2014}L.~Kadison, Hopf subalgebras and tensor products of generalized permutation modules, \textit{J.\ Pure Appl.\ Algebra} \textbf{218} (2014), 367--380.
\bibitem{K}L.~Kadison, Algebra depth in tensor categories, \textit{Bulletin B.M.S.}, to appear. 
\bibitem{CK}C.~Kassel, \textit{Quantum Groups}, G.T.M.\ \textbf{155}, Springer, 1995.  
\bibitem{P}D.S.~Passman, The adjoint representation of group algebras and enveloping algebras, \textit{Publicacions Matem\`atiques} \textbf{36} (1992), 861--871.
\bibitem{SY} A.~Skowronski and K.~Yamagata, \textit{Frobenius Algebras 1, Basic Representation Theory},  E.M.S. Textbook series, 2011.
\end{thebibliography}

We remark that the endomorphism ring $\End Q^{\otimes 2}$ of the $U_n(q)$-module $Q \otimes Q$ is therefore 
Morita equivalent to the Auslander algebra of $U_n(q)$ \cite{SY}. 

\begin{cor}
For the Taft algebra $U_n(q)$ in its Drinfeld double, the depth of $Q$ is $d(Q_{U_n(q)}) = 2$ and the minimum even depth $$d_{ev}(U_n(q), D(U_n(q)) = 6.$$
\end{cor}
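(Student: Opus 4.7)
The plan is to reduce the corollary to a depth computation in the Green ring $r(U_n(q))$ via the relation~(\ref{eq: ev}), namely $d_{ev}(U_n(q), D(U_n(q))) = 2d(Q_{U_n(q)}) + 2$; it therefore suffices to prove $d(Q_{U_n(q)}) = 2$, i.e., to show $Q \not\sim Q^{\otimes 2}$ but $Q^{\otimes 2} \sim Q^{\otimes 3}$. Both steps will follow directly from the two main theorems of this and the previous section, together with the fact that $Q$, being a right $H$-module coalgebra (see Section~\ref{S:two}), satisfies $\tilde{q}^{i} \| \tilde{q}^{i+1}$ in the Green ring for every $i \geq 1$.

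For the lower bound $d(Q_{U_n(q)}) \geq 2$, I would observe that Theorem~\ref{T:KSdecQ} expresses $Q$ as a direct sum of exactly $2n - 1$ indecomposables, while $U_n(q)$ has $n^2$ isoclasses of indecomposables; since $(n-1)^2 > 0$ for every $n \geq 2$, at least one indecomposable is absent from $Q$. For instance, the projective cover $P_1$ does not appear in the list $\{P_2 J^{n-\ell}\}_{\ell=1}^{n} \cup \{P_{2+\ell'} J^{n-\ell'}\}_{\ell'=1}^{n-1}$, since the only $P_1$-type summand on this list is $P_1 J$ (taking $\ell' = n-1$), not $P_1$ itself. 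By contrast, Theorem~\ref{T:decQQ} asserts that every indecomposable occurs in $Q \otimes Q$, so $\Indec(Q) \subsetneq \Indec(Q^{\otimes 2})$ and in particular $Q \not\sim Q^{\otimes 2}$.

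For the upper bound $d(Q_{U_n(q)}) \leq 2$, I would combine $Q^{\otimes 2} \| Q^{\otimes 3}$ (from the module-coalgebra property) with Theorem~\ref{T:decQQ}: the first relation gives $\Indec(Q^{\otimes 2}) \subseteq \Indec(Q^{\otimes 3})$, while the reverse containment is automatic, because $\Indec(Q^{\otimes 2})$ already exhausts all indecomposable isoclasses. Hence $Q^{\otimes 2} \sim Q^{\otimes 3}$, yielding $d(Q_{U_n(q)}) = 2$; substituting into~(\ref{eq: ev}) produces $d_{ev}(U_n(q), D(U_n(q))) = 6$. The real technical work was carried out in Theorems~\ref{T:KSdecQ} and~\ref{T:decQQ}; the corollary itself is a bookkeeping consequence, so no serious obstacle is expected at this stage beyond identifying a single missing indecomposable in $Q$.
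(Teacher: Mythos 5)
Your proposal is correct and follows essentially the same route as the paper, which states the corollary as an immediate consequence of Theorems~\ref{T:KSdecQ} and~\ref{T:decQQ} together with Eq.~(\ref{eq: ev}) and the module-coalgebra property $\tilde{q}^{i} \| \tilde{q}^{i+1}$. Your explicit identification of $P_1 \notin \Indec(Q)$ (and the count $2n-1 < n^2$) just makes precise the step $Q \not\sim Q^{\otimes 2}$ that the paper leaves implicit.
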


\medskip

\subsection{Example: the $n = 2$ Sweedler Hopf algebra case}\label{E:51}
We recapitulate the computations  in the toy model case when $n = 2$, $k=\mathbb{C}$ and $q = -1$, the Sweedler Hopf algebra, with some additional remarks.  The Hopf algebra structure of $R=H_4$  is then given by $H_4 = \mathbb{C} \bra a,b,\| a^2 = 0, b^2 = 1, ab = -ba \ket$
with coproduct \begin{equation}
\label{eq: cop}
\cop(b) = b \otimes b, \ \ \ \ \cop(a) = a \otimes b + 1 \otimes a,
\end{equation}
and counit $\eps(b) = 1$, $\eps(a) = 0$, and antipode $S(b) = b$, $S(a) = -ab$. The radical ideal
is $J = aH_4$.  
In all, there are four indecomposables $P_1 = e_1H_4$, $P_2 = e_2 H_4$, where $e_1 = (1 + b)/2$, $e_2 = (1-b)/2$,
and $S_1 = P_1/P_1 a$, $S_2 = P_2 / P_2 a$, up to isomorphism. 

The quantum double algebra is given in terms of generators and relations by 
\begin{eqnarray*}
\label{eq: double}
D(H_4) & = & \mathbb{C} \bra a,b,c,d \| a^2 = 0 = d^2, b^2 = 1 = c^2, bc = cb, ab = -ba, \\
& & \ \ \ \ \ \ \ \ \ \ \ \ cd = -dc, ac = -ca, bd = -db, da + ad = 1 - bc \ket
\end{eqnarray*}
which is $16$-dimensional with basis $\{ a^ib^jc^rd^s \| i,j,r,s = 0,1 \}$. Note the obvious symmetry given by the involutory algebra automorphism exchanging $a$ with $d$ and $b$ with $c$.
The $\mathbb{C}$-algebra $A = D(H_4)$ has a basic set of two \textit{central} orthogonal idempotents $e_1 = \frac{1 - bc}{2}$ and
$e_2 = \frac{1 + bc}{2}$.  Note that $ad + da = 2e_1$, $e_2 ad = -e_2da$ and that
$e_{11} := e_1ad/2$ is a noncentral idempotent.  It follows that $A = e_1A \oplus e_2 A$,
where  our computations give $e_1 A \cong M_2(\mathbb{C}) \otimes \mathbb{C} \Z_2$, where the other matrix units are given by
$e_{22} := e_1da/2$, $e_{12} := e_1 a/ \sqrt{2}$ and $e_{21} := e_1d/\sqrt{2}$.  The block
$e_1A$ is in fact a bi-ideal in the Hopf algebra structure on $A$.  The coproduct formulas and the representation theory of $D(H_4)$ may be found in \cite{HXC}, where it may also be noted that the symmetry is a Hopf algebra isomorphism of copies of $H_4$ within $D(H_4)$.  

The block $e_2A$ is isomorphic as Hopf algebras to $H_8 := \overline{U_i}(sl_2(\mathbb{C}))$, the small quantum group at the fourth root of unity $i$ generated by a grouplike $K$, and two commuting nilpotent elements $E,F$
that anti-commute with $K$ (all of order $2$, see for example \cite[Example 4.9]{K2014}).  The isomorphism is given by
$e_2 \mapsto 1$, $e_2 b \mapsto K$, $e_2 a \mapsto E$ and $e_2 bd \mapsto F$, the rest being an exercise; alternatively, it is part of a more general fact for the Drinfeld double of any half quantum group and any small quantum group sharing the same (odd or half-even) root-of-unity \cite[Chapter 9]{CK}.  

The quotient module $Q = D(H_4)/ H_4^+ D(H_4)$ is computed by noting that $H_4^+ = aH_4 + (1 - b)H_4$, thus has basis $\{ \overline{1}, \overline{c}, \overline{d}, \overline{cd} \}$.  The right $H_4$-module structure is given by
\begin{eqnarray*}
\overline{1}. a & = & \overline{0} = \overline{c}. a \\
\overline{1} . b & = & \overline{1}, \ \ \overline{c}.b = \overline{c} \\
\overline{d}.a & = & \overline{1}-\overline{c}, \ \ \overline{d}.b = -\overline{d} \\
\overline{cd}.a & = & \overline{c}-\overline{1}, \ \ \overline{cd}.b = -\overline{cd}
\end{eqnarray*}
which are short computations with the relations of $D(H_4)$, or follow from Eqs.~(\ref{E:b_action}) and~(\ref{E:a_action}).

Note the following submodules of $Q_{H_4}$.  The $2$-dimensional submodule with basis $\{ \overline{d}, \overline{1}-\overline{c} \}$ isomorphic to $P_2$.  The $1$-dimensional submodule spanned by $\overline{c}$, isomorphic to $S_1$.  The $1$-dimensional submodule spanned by $\overline{d} + \overline{cd}$, isomorphic to $S_2$. Then it is easy to see that $Q_{H_4} \cong S_1 \oplus S_2 \oplus P_2$.  Note that $P_1$ is not
a  constituent of $Q$.  

Since $S_1$ is the unit module, the tensor-square $Q \otimes Q$ has summand $P_2 \otimes P_2$, which may be computed directly or using Theorem~\ref{thm-COZ}, as 
\begin{equation}
P_2 \otimes P_2 \cong P_1 \oplus P_2
\end{equation}
We see that $Q \otimes Q$, unlike $Q$, has all four indecomposables of $H_4$ as nonzero constituents, so its depth as an $H_4$-module is $2$.

\section*{Acknowledgements}

Samuel A.\ Lopes was partially supported by CMUP (UID/MAT/00144/2013), which is funded by FCT (Portugal) with national (MEC) and European structural funds (FEDER), under the partnership agreement PT2020.

\end{document}